\newtheorem{theorem}{Theorem}[section]
\newtheorem{lemma}[theorem]{Lemma}
\newtheorem{corollary}[theorem]{Corollary}
\newtheorem{example}[theorem]{Example}
\newtheorem{question}[theorem]{Question}
\theoremstyle{definition}}
\theoremstyle{definition}\newtheorem{definition}[theorem]{Definition}}
\theoremstyle{definition}\newtheorem{remark}[theorem]{Remark}}
\numberwithin{equation}{section}
\def\C{{\mathbb C}}
\def\N{{\mathbb N}}
\def\Z{{\mathbb Z}}
\def\Q{{\mathbb Q}}
\def\K{{\mathbb K}}
\def\F{{\mathbb F}}
\def\aaa{{\mathbb A}}
\def\epsilon{\varepsilon}
\def\kappa{\varkappa}
\def\phi{\varphi}
\def\leq{\leqslant}
\def\geq{\geqslant}
\def\dim{{\rm dim}\,}
\def\deg{\hbox{\tt deg}\,}
\title{Quadratic automaton algebras and intermediate growth}
\author{Natalia Iyudu and Stanislav Shkarin}
\date{}
\begin{document}

\maketitle

\begin{abstract} We present an example of a quadratic algebra given by three generators and three relations, which is automaton (the set of normal words forms a regular language) and such that its ideal of relations does not possess a finite Gr\"obner basis with respect to any choice of generators and any choice of a well-ordering of monomials compatible with multiplication. This answers a question of Ufnarovski.

 Another result is a simple example (4 generators and 7 relations) of a quadratic algebra of intermediate growth.
\end{abstract}

\small \noindent{\bf MSC:} \ \ 17A45, 16A22

\noindent{\bf Keywords:} \ \ Automaton algebras, Word combinatorics, Regular Language, Gr\"obner basis, Quadratic algebras, Koszul algebras, Finitely presented algebras, Hilbert series, Gelfand--Kirillov dimension   \normalsize

\section{Introduction \label{s1}}\rm

In the first part of the paper we consider automaton algebras, that is algebras which could be associated to a finite graph.
This is a property of an algebra, formulated in terms of language of its normal words, that is in terms of words combinatorics. This approach to the study of algebra (its linear basis, Hilbert series, growth, structural properties) is very powerful, widely known,
see, for example, book of collective author M.~Lothaire
 \cite{L}  or A.~Belov, V.~Borisenko, V.~Latyshev \cite{BBL}, or book of M.~Sapir \cite{S}, and fruitfully connects algebraic study with symbolic dynamics.

The class of automaton algebras was introduced by
Ufnarovski \cite{ufn2} (see also his book  \cite[Chapter~5]{ufn1}).
He asked if every automaton algebra admits a finite Gr\"obner basis in its ideal of relations with respect to some choice of generators and an ordering on monomials.
It is a very natural question, and many people tried to figure it out, however without a success.
The difficulty here lies in the fact that there are infinitely many generating sets as well as infinitely many admissible orderings on monomials to deal with.

We argue, using ${\rm mod}\,p$ specialization of coefficients of initial algebra over the field of characteristic zero. On the way we prove quite interesting statement, allowing to capture the fact of existence  of finite Gr\"obner basis. It says that if algebra over a field of characteristic zero did have a finite Gr\"obner bases, then after specialization ${\rm mod}\,p$ its growth can't go up (thus stays the same) for all but finitely many p. Then the difficulty of considering all possible orderings transforms to consideration of all possible ${\rm mod}\,p$ specializations, and this turns out to be more tractable, we are able to follow the the pattern of the Gr\"obner basis for any $p$. We prove that in a particular example, namely, the algebra with three generators $x,y,z$,
and relations

$$
xx-xz-2zz=0,\ yz=0,\ xy=0,
$$
which is automaton and has a polynomial growth, Hilbert series $H(t)= \frac{1}{(1-t)^3}$ in characteristic zero and two, in any odd finite characteristic becomes an algebra of exponential growth.

Thus, the first main result of this paper is an example of a quadratic automaton algebra $A$ given by three generators and three relations such that the ideal of relations of $A$ does not possess a finite Gr\"obner basis with respect to any choice of generators and any choice of a well-ordering on monomials compatible with multiplication. A finite Gr\"obner basis does not exist after any  extension of the ground field as well.

The second our result  is a simple example of quadratic algebra (4 generators, 7 relations)  with intermediate growth.

 One example of finitely presented cubic algebra of intermediate growth was mentioned in a note added in proof of the paper by  Shearer \cite{she}. There was no explanation why the Hilbert series is as stated, and it was elaborated in paper by Kobayashi \cite{koba}, where he also constructs other examples and applies them to the word problem in monoids.
  Another example was due to Ufnarovskii \cite{ufn3}, who proves that the universal enveloping of the (infinite dimensional) Witt Lie algebra is finitely presented and has intermediate growth. The paper \cite{kkm} by Kirillov, Kontsevich and Molev contains a number of results on algebras of intermediate growth both in the associative and Lie setting together with yet another example of universal enveloping with intermediate growth. All these examples were not quadratic.
  Recently,  Ko\c cak \cite{ko} used the idea suggested by Zelmanov, that one can take
  a Veronese subalgebra of (a slightly modified to make it degree graded) universal enveloping of the Witt algebra, to obtain quadratic algebra with intermediate growth.
  One drawback of his example is its size: it is presented by 14 generators and 96 quadratic relations. Literally, the presentation of this quadratic algebras in \cite{ko} has two and a half pages of relations.
So we thought, that example of quadratic algebra with 4 generators and  7 relations,  with intermediate growth, could be of interest.

Now we formulate results more precisely.
We start by recalling relevant definitions. All algebras we deal with are associative. Throughout the paper, $\K$ is a field. If $A$ is a unital $\K$-algebra and $X\subset A$ generates $A$ (as a unital algebra), then $A$ can be naturally interpreted as the quotient of the free algebra $\K\langle X\rangle$ on generators $X$ by the ideal $I$ consisting of all $f\in \K\langle X\rangle$ vanishing when considered as elements of $A$. The ideal $I$ is known as the \textbf{\textit{ideal of relations}} of $A$. A generating set $X$ together with a set $F\subseteq I$ generating $I$ as a (two-sided) ideal is known as a \textbf{\textit{presentation}} of $A$. An algebra is called \textbf{\textit{finitely presented}} if $X$ and $F$ can be chosen finite. In other words, a finitely presented algebra is an algebra isomorphic to a quotient of $\K\langle X\rangle$ with finite $X$ by a finitely generated ideal. The set of all words (including the empty word $1$) in the alphabet $X$ is denoted $\langle X\rangle$. A well-ordering $\leq$ on $\langle X\rangle$ is said to be \textbf{\textit{compatible with multiplication}} if
$$
\text{$1\leq u$ for all $u\in \langle X\rangle$ and $u\leq v\,\Longrightarrow\,uw\leq vw,\ wu\leq wv$ for all $u,v,w\in \langle X\rangle$.}
$$
If we fix a well-ordering $\leq$ on $\langle X\rangle$ compatible with multiplication, we can talk of the leading monomial $\overline{f}$ of a non-zero $f\in \K\langle X\rangle$ (=the biggest with respect to $\leq$ monomial, which features in $f$ with non-zero coefficient). A subset $G$ of an ideal $I$ in $\K\langle X\rangle$ is called a \textbf{\textit{Gr\"obner basis of $I$}} if $0\notin G$, $G$ generates $I$ as an ideal and for each non-zero $f\in I$, there is $g\in G$ such that $\overline g$ is a subword of $\overline{f}$. Such a subset is by no means unique: for one, $I\setminus \{0\}$ fits the bill. However, a couple of extra conditions pinpoint $G$. Namely, if we additionally assume that a Gr\"obner basis $G$ satisfies
\begin{itemize}\itemsep-3pt
\item for every two distinct $f,g\in G$, $\overline f$ is not a subword of any monomial featuring in $g$;
\item every $f\in G$ is monic: the $\overline f$-coefficient in $f$ equals $1$,
\end{itemize}
then $G$ becomes unique. Such a basis is called the \textbf{\textit{reduced Gr\"obner basis of $I$}}. Note that $I$ possesses a finite Gr\"obner basis if and only if its reduced Gr\"obner basis is finite.

The non-commutative Buchberger algorithm \cite{ber} applied to the set of defining relations yields the reduced Gr\"obner basis for the ideal of relations of any finitely presented algebra. One of the problems though is that (unlike for the commutative case) the procedure does not have to terminate in finitely many steps. What is even worse, there is no {\it a-priory} way to say if it does. Furthermore, everything is highly sensitive to the choice of the generators and the ordering. The words $u\in \langle X\rangle$, which have no leading monomials of elements of the ideal $I$ of relations of $A$ as subwords, are called \textbf{\textit{normal words}} for $A$. It is easy to see that normal words form a linear basis in $A$. Clearly, if $G$ is a Gr\"obner basis for $I$, then a word $u\in \langle X\rangle$ is normal if and only if it has no leading monomials of elements of $G$ as subwords.

Recall that if $X$ is a finite set, then a subset $L$ of $\langle X\rangle$ forms a \textbf{\textit{regular language}} if there is a finite quiver (loops and multiple edges are allowed) with each arrow marked by a letter from $X$, while some vertices are marked by symbols $S$ and/or $T$ such that $L$ consists (exactly) of the words that can be read from paths in this quiver starting at a vertex marked by $S$ and terminating at a vertex marked by $T$. Ufnarovski \cite{ufn1} has introduced the class of automaton algebras in the following way.

\begin{definition}\label{aual}
A finitely generated algebra $A$ is called an \textbf{\textit{automaton algebra}} if there exists a finite generating set $X$ for $A$ such that with respect to some well-ordering on $\langle X\rangle$ compatible with multiplication, the set $NW$ of normal words for $A$ forms a regular language.
\end{definition}

There is correspondence between the set of normal words $NW$, and the set $LM$ of leading monomials of the reduced Gr\"obner basis of the ideal of relations of $A$, namely each of them can be reconstructed from another. Namely, $NW$ is the set of all monomials which does not contain elements of $LM$, as subwords, and $LM$, are all those words in which all subwords are from $NW$, but they themselves are not.
 Hence $NW$ is a regular language if and only if the set $LM$  is a regular language.
This means that $NW$ and $LM$ are interchangeable in the definition of automaton algebras. In particular, an algebra, whose ideal of relations possesses a finite Gr\"obner basis (with respect to some choice of a generating set and an ordering on monomials), is an automaton algebra. On the other hand, there are examples \cite{ufn1,MN} of automaton algebras with infinite Gr\"obner bases. The question mentioned above is whether this may be rectified by a clever choice of generators and of an ordering.

\begin{question}\label{mainq}
Given an automaton algebra $A$, is it possible to find a finite generating set $X$ and an ordering on $\langle X\rangle$ compatible with multiplication such that the ideal of relations of $A$ has a finite Gr\"obner basis?
\end{question}


Theoretically speaking, there is one more degree of freedom here. Replacing the ground field $\K$ by a bigger field (a one containing $\K$ as a subfield) preserves the automaton property, but it yields extra opportunities for a finite Gr\"obner basis to exist. Replacing $\K$ by a bigger field $\F$ amounts to replacing a finitely presented $\K$-algebra $A$ by $A_\F=\F\otimes_\K A$ treated as an $\F$-algebra. Clearly, any presentation of $A$ as a $\K$-algebra works as a presentation of $A_\F$ as an $\F$-algebra (but not vice versa).

Note that automaton algebras share a lot of properties with algebras possessing a finite Gr\"obner basis in the ideal of relations \cite{ufn1,ufn2,MN,MN1,aut1}. For instance, the Hilbert series of each automaton algebra is rational (hence intermediate growth is impossible), nil automaton algebras are finite dimensional, an automaton algebra of exponential growth contains a subalgebra isomorphic to the free algebra on two generators, etc. This was another evidence for the conjecture that these two classes coincide.
It is also quite clear, that not every quadratic algebra is automaton. This could be deduced, for instance, from the example
by Shearer \cite{she} of a quadratic algebra (=a finitely presented algebra with all defining relations being homogeneous of degree 2), whose Hilbert series is not rational. Shearer produced this example answering a well-known question of Govorov \cite{gov} whether every finitely presented algebra has rational Hilbert series.

\begin{definition}\label{nogb} We say that a finitely presented $\K$-algebra $A$ is a \textbf{\textit{finite Gr\"obner basis algebra}} (\textbf{\textit{FGB-algebra}} for short), if there exists a field $\F$ containing $\K$ as a subfield such that there is a finite Gr\"obner basis in the ideal of relations of $A_\F$ for some choice of a finite generating set $X$ and of a well-ordering on $\langle X\rangle$ compatible with multiplication. Otherwise, we say that $A$ is an \textbf{\textit{infinite Gr\"obner basis algebra}} (\textbf{\textit{IGB-algebra}} for short).
\end{definition}

Our main objective is to provide examples of finitely presented algebras, which are IGB and automaton. Our method allows to construct unlimited number of such examples. We give two: one quadratic on three generators given by three relations and one cubic on two generators given by two relations.

\begin{theorem} \label{main} Let $\K$ be an arbitrary field of characteristic zero. Let $A$ be the $\K$-algebra given by generators $x,y,z$ and relations $xy$, $yz$ and $x^2-xz-2z^2$ and $B$ be the $\K$-algebra given by generators $x,y$ and relations $y^3$ and $x^2y-yx^2-yxy$. Then both $A$ and $B$ are automaton IGB-algebras.
\end{theorem}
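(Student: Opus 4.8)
The plan is to treat the two assertions — that $A$ and $B$ are automaton, and that they are IGB — separately, since the arguments are essentially orthogonal. The automaton property should be established directly by exhibiting, for each algebra with the given generators and a suitable admissible ordering (say deglex), the set $LM$ of leading monomials of the reduced Gröbner basis over $\K$ and checking that it (equivalently $NW$) is regular. For $A$, starting from the relations $xy,\,yz,\,x^2-xz-2z^2$ with $x>y>z$, one runs the Buchberger procedure; the relation $x^2\to xz+2z^2$ will interact with $xy\to 0$ to produce overlap obstructions, generating an infinite family of leading monomials, but one expects this family to have a transparent combinatorial shape (something like $xz^ky$ or $z^k$-prefixed patterns) so that the complement $NW$ is described by a finite automaton. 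The same is done for $B$ with relations $y^3,\,x^2y-yx^2-yxy$. Here I would carefully track how $x^2y\to yx^2+yxy$ combines with $y^3\to 0$; again the reduced Gröbner basis is infinite but its leading monomials form a regular language. This part is bookkeeping: the key is to present the automaton (quiver) explicitly and verify closure, i.e. that no new leading monomials arise outside the claimed regular pattern.

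For the IGB claim, the strategy is the one advertised in the introduction: use the growth/Hilbert-series obstruction under ${\rm mod}\,p$ specialization. One first shows that $A$ (over $\K$ of characteristic zero, or equivalently over $\Q$) has polynomial growth with Hilbert series $\tfrac{1}{(1-t)^3}$, and likewise computes the Hilbert series / growth type of $B$ in characteristic zero. Then one invokes the quoted principle — if a characteristic-zero algebra were FGB, then for all but finitely many primes $p$ its ${\rm mod}\,p$ specialization has the same growth — in contrapositive form: it suffices to find infinitely many primes $p$ (here, all odd $p$) for which the specialization of $A$ has strictly larger growth, namely exponential. So the core computation is to analyze the $\F_p$-algebra with relations $xy,\,yz,\,x^2-xz-2z^2$ for odd $p$ and show it has exponential growth — equivalently, that it contains a free subalgebra on two generators, or that $NW_{\F_p}$ grows exponentially. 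The point is that over $\F_p$ with $p$ odd the element $2$ is a unit and behaves generically, whereas in characteristics $0$ and $2$ the relation $x^2-xz-2z^2$ factors or degenerates favorably; one should make this dichotomy precise by running Buchberger over $\F_p$ and identifying a self-overlapping pattern that is absent in characteristic $0$. The analogous analysis is carried out for $B$ with $y^3$ (so characteristic $3$ is the special one, and all $p\neq 3$ give the jump, or whatever the correct exceptional set turns out to be).

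Putting these together: in characteristic zero both algebras have controlled (polynomial, or at worst subexponential) growth and are automaton by the explicit regular-language description; for infinitely many $p$ the ${\rm mod}\,p$ reduction has exponential growth; hence by the specialization lemma neither algebra can be FGB over any extension field, i.e. both are IGB. The automaton property and the IGB property together are exactly the assertion of the theorem. The main obstacle I anticipate is the $\F_p$ growth computation: one must run the non-commutative Buchberger algorithm over $\F_p$ far enough to recognize the stable pattern of the (infinite) Gröbner basis uniformly in $p$, and then extract from that pattern a proof of exponential growth — verifying that the new obstructions genuinely produce a free subalgebra rather than collapsing. A secondary, more routine difficulty is confirming that the characteristic-zero Gröbner basis, though infinite, still yields a regular $NW$, and that the exceptional primes are only the ones claimed.
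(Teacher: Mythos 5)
Your overall route is the same as the paper's: prove the automaton property by exhibiting the (infinite) reduced Gr\"obner basis in characteristic zero and observing that its leading monomials form a regular language, compute the characteristic-zero growth, and then get IGB from the mod-$p$ specialization principle (the paper's Theorem~\ref{fgb}) by showing that for infinitely many primes the reduced algebras have exponential growth. The problem is that, as written, everything that actually needs proving is deferred as ``bookkeeping'' or an ``anticipated obstacle'': you do not produce the Gr\"obner bases, the leading-monomial languages, the Hilbert series, or the exponential-growth estimate, and these computations (the paper's Examples~\ref{exa1} and~\ref{exa2} and Lemmas~\ref{exale} and~\ref{exale1}) are the entire substance of the proof. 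For $A$ with deglex and $x>y>z$ one must check inductively on overlaps that in characteristic zero the leading monomials are exactly $yz$, $xz^jx$, $xz^jy$ ($j\in\Z_+$), giving $H_A(t)=\frac{1}{(1-t)^3}$, ${\rm GKdim}(A)=3$, and a visibly regular language; similarly for $B$.

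More importantly, the mechanism you propose for the characteristic-$p$ jump in $A$ is not the right one and would not yield a proof: invertibility of $2$ cannot be the dichotomy, since $2$ is invertible in characteristic zero as well. What actually changes the picture is that the parameter $s=\frac{1-b}{1+b}=-2$ (here $b=-3$, $a=\frac{b^2-1}{4}=2$) has finite multiplicative order $m$ in $\Z_p^*$ when $p\geq 5$, whereas it has infinite order in characteristic zero. At degree $m+2$ the Gr\"obner basis pattern switches: one obtains $xz^m-z^mx$ and $z^my$, and then the monomial relations $z^m(xz^{m-1})^jy$, so that $H_A(t)=\frac{1-t^m-t^{m+1}}{(1-t)^2(1-t-t^m)}$, whose coefficients grow like $c_m^{-n}$ with $c_m\in(0,1)$ the root of $1-t-t^m$; this is the exponential growth you need, and without identifying this finite-order phenomenon your plan contains no argument for it. Two smaller corrections: for $B$ the exceptional set you guess at is empty --- with $a=1$ the relevant condition $1+a+\cdots+a^k=0$ reads $p\mid k+1$, so every positive characteristic gives (a finite Gr\"obner basis and) exponential growth --- and for $A$ it is enough, and is all the paper proves, that the jump occurs for $p\geq 5$, which is already infinitely many primes, so your claim about ``all odd $p$'' need not (and for $p=3$ is not shown to) hold.
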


The advantage of $A$ is in being quadratic, while the advantage of $B$ is that it is generated by just two elements. The proof is based on the fact
(Theorem~\ref{nofgb}) that the existence of finite Gr\"obner basis implies that the growth of algebra and any its specialization ${\rm mod}\,p$ coincide (for almost all $p$).
Then the example of algebra without finite  Gr\"obner basis, is an algebra with integer coefficients  and finite Gelfand-Kirillov dimension, which becomes infinite when coefficients are written ${\rm mod}\,p$, for all prime $p$ except from 2. The growth is determined via calculation of Gr\"obner bases and Hilbert series.

Before proceeding, we would like to make one more comment. Ufnarovskii \cite{ufn1,ufn2} demonstrated that a quadratic algebra given by two (homogeneous degree 2) relations is automaton. What one can actually observe  is that it is always an FGB-algebra. On the other hand, one easily checks that any quadratic algebra on two generators is an FGB-algebra as well. Thus, as far as quadratic algebras are concerned, our example is minimal: we could not possibly have less than three generators or less than three relations.

\subsection{Growth of finitely generated algebras}

Part of our argument is based on the growth of finitely generated algebras. We recall relevant concepts. Let $A$ be a finitely generated unital $\K$-algebra and $X$ be a finite generating set for $A$. Let $A^{(n)}=A_X^{(n)}$ be a linear subspace of $A$ spanned by the products of up to $n$ elements of $X$ (we include $1$, which is the product of zero elements of $X$). Clearly, $A^{(n)}$ form a filtration on $A$. The power series $H_A(t)=H^X_A(t)=\sum\limits_{n=0}^\infty a_nt^n$ with $a_0=1$ and $a_n=a_n(X)=\dim A^{(n)}/A^{(n-1)}$ for $n\geq 2$ is called the \textbf{\textit{Hilbert series}} of $A$ (with respect to the generating set $X$). If we know the normal words for $A$ with respect to some well-ordering on $\langle X\rangle$, we know the Hilbert series as well: $a_n$ is the number of normal words of degree $n$. Note that if we assume that each element of $X$ has degree $1$ and that each defining relation is homogeneous, then the ideal of relations $I$ and therefore $A$ itself are degree graded. In this case $a_n$ becomes the dimension of the space $A_n$ of homogeneous elements of $A$ of degree $n$ (by convention, zero has any degree we like). The series $P_A(t)=P^X_A(t)=\frac{H^X_A(t)}{1-t}=\sum\limits_{n=0}^\infty \alpha_nt^n$ with $\alpha_n=\alpha_n(X)=a_0+{\dots}+a_n=\dim A^{(n)}$ is called the \textbf{\textit{Poincar\'e series}} of $A$ (with respect to the generating set $X$).

Let $A$ be a finitely generated unital $\K$-algebra. Obviously, the Hilbert and Poincar\'e series $H^X_A$ and $P^X_A(t)=\sum \alpha_n(X)t^n$ depend on the choice of the generating set $X$. However, some of their features do not depend on this choice. Indeed, let $X$ and $Y$ be two finite generating sets of $A$. Since $X$ and $Y$ are finite, we can choose $m\in\N$ such that $X\subset A_Y^{(m)}$ and $Y\subset A_X^{(m)}$. Then
\begin{equation}\label{albe}
\alpha_n(X)\leq \alpha_{mn}(Y)\ \ \text{and}\ \ \alpha_n(Y)\leq \alpha_{mn}(X)\ \ \text{for all $n\in\Z_+$.}
\end{equation}
In particular, it follows that the limits
\begin{equation}\label{gkd}
\textstyle {\rm GKdim}(A)=\limsup\limits_{n\to\infty}\frac{\ln \alpha_n(X)}{\ln n}\in[0,\infty]
\end{equation}
and
\begin{equation}\label{kappa}
\textstyle \kappa(A)=\limsup\limits_{n\to\infty}\frac{\ln\ln \alpha_n(X)}{\ln n}\in[0,1]
\end{equation}
do not depend on the choice of $X$.

\begin{definition}\label{gek}
The number ${\rm GKdim}(A)$ defined in (\ref{gkd}) is known as the \textbf{\textit{Gelfand--Kirillov dimension}} of $A$. Algebras of finite Gelfand--Kirillov dimension are said to have \textbf{\textit{polynomial growth}}. On the other hand, if
$$
\textstyle \limsup\limits_{n\to\infty}\frac{\ln \alpha_n(X)}{n}>0,
$$
we say that $A$ has \textbf{\textit{exponential growth}}. A finitely generated algebra $A$ which has infinite Gelfand--Kirillov dimension and is not of exponential growth is said to have  \textbf{\textit{intermediate growth}}.
\end{definition}

\begin{remark}\label{gek1}
Again, for an algebra to have exponential growth does not depend on the choice of a finite generating set $X$. Note that while the limit in the above display does depend on $X$, it being positive does not according to (\ref{albe}). Obviously, every algebra with exponential growth has infinite Gelfand--Kirillov dimension. Note also that polynomial and exponential growths are related to the number $\kappa(A)$ defined in (\ref{kappa}):
\begin{equation}\label{poex}
\begin{array}{l}
\text{if $A$ has polynomial growth, then $\kappa(A)=0$;}
\\
\text{if $A$ has exponential growth, then $\kappa(A)=1$.}
\end{array}
\end{equation}
\end{remark}

We use growth arguments in the proof of Theorem~\ref{main}. The second result of this paper is the following example.

\begin{theorem}\label{inter} Let $\K$ be an arbitrary field and $C$ be the $\K$-algebra given by four generators $x,y,z,u$ and seven quadratic relations $xu-yz$, $yu-zx$, $zu-uz$, $yy$, $yx$, $xy$ and $xx$. Then $C$ has intermediate growth.
\end{theorem}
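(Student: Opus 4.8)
The plan is to compute the Hilbert series of $C$ explicitly by finding a Gröbner basis (or at least the set of normal words) for its ideal of relations, and then to read off from the leading behaviour of $H_C(t)$ both that $\mathrm{GKdim}(C)=\infty$ and that $C$ is not of exponential growth. First I would fix the degree-lexicographic order with $u>z>x>y$ (the precise choice to be adjusted once the overlaps are examined) and run the non-commutative Buchberger procedure on the seven given relations $xu-yz$, $yu-zx$, $zu-uz$, $y^2$, $yx$, $xy$, $x^2$. The four monomial relations $y^2,yx,xy,x^2$ already kill every word containing two consecutive letters from $\{x,y\}$, so a normal word has the shape $w_0\,s_1\,w_1\,s_2\cdots s_k\,w_k$ where each $s_i\in\{x,y\}$ and each $w_j$ is a (possibly empty) word in $\{z,u\}$, with $w_1,\dots,w_{k-1}$ nonempty. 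The remaining three relations let us rewrite: $zu\mapsto uz$ pushes all $u$'s in a $\{z,u\}$-block to the left of the $z$'s, while $xu\mapsto yz$ and $yu\mapsto zx$ eliminate a letter $x$ or $y$ immediately followed by $u$, at the cost of turning it into $y$ or $z\cdot x$ and consuming the $u$. The main combinatorial task is to resolve all overlap ambiguities among these rules together with the monomial ones (e.g. the overlap $xuz$ vs.\ $xu$, the overlap $zuu$, the overlaps of $zu$ with $xu$ coming through $\dots xzu\dots$, and the overlaps created after a substitution reintroduces an $x$ or $y$) and show the process terminates, yielding a finite Gröbner basis and hence an explicit regular description of the normal words.

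Once the normal words are pinned down I would count them degree by degree to get $H_C(t)=\sum a_n t^n$. The expectation — consistent with the ``intermediate growth'' claim and with the Witt-algebra heuristic behind such examples — is that $a_n$ grows subexponentially but faster than any polynomial; concretely I would aim to show something like $\log a_n \asymp \sqrt{n}$ (equivalently $\kappa(C)=\tfrac12$ in the notation of (\ref{kappa})), which simultaneously forces $\mathrm{GKdim}(C)=\infty$ by (\ref{gkd}) and rules out exponential growth since $\tfrac{\log a_n}{n}\to 0$. The counting should reduce to a lattice-path / partition-type generating-function computation: a normal word is essentially a sequence of $\{z,u\}$-blocks separated by single letters from $\{x,y\}$, modulo the identifications forced by $xu,yu,zu$, and the number of such data of total length $n$ is governed by a generating function whose coefficients one estimates by a saddle-point or Tauberian argument. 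To certify the two growth statements rigorously it is enough to exhibit, for the lower bound, an injective family of at least $e^{c\sqrt n}$ normal words of degree $\le n$ (e.g.\ words encoding partitions of an integer of size $\Theta(n)$ into distinct parts, realised as varying block lengths), and for the upper bound a crude count $a_n\le$ (number of compositions recording block structure) which is $e^{o(n)}$.

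The hard part will be the Gröbner basis computation: the substitutions $xu\mapsto yz$ and $yu\mapsto zx$ do not decrease the number of letters from $\{x,y,z\}$ in a controlled way and can cascade (an $x$ produced by reducing $yu$ may sit in front of another $u$), so proving termination and correctly enumerating all the new relations that appear is delicate, and it is here that one must be careful that the answer is genuinely characteristic-free as the theorem asserts — i.e.\ that no coefficient that could vanish in some characteristic is ever inverted during reduction (all our relations are $\pm$-coefficient, which is reassuring). A secondary obstacle is making the asymptotic count of normal words precise enough to land strictly between polynomial and exponential; but once $H_C(t)$ is known as an explicit (presumably non-rational, $\theta$-like) series, the estimates $\log a_n\asymp\sqrt n$ are standard. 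I would present the final argument by (i) stating the finite Gröbner basis and the resulting automaton/regular description of $NW$, (ii) deriving the closed form or functional equation for $H_C(t)$, and (iii) extracting $\kappa(C)=\tfrac12$, hence intermediate growth.
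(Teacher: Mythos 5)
There is a genuine gap at the heart of your step (i): you plan to show that the Buchberger procedure terminates, yielding a \emph{finite} Gr\"obner basis and hence a regular (automaton) description of the normal words. This cannot happen, and if it did it would refute the very statement you are trying to prove: a finite Gr\"obner basis (for any choice of generators and compatible well-ordering) makes $C$ an automaton algebra, automaton algebras have rational Hilbert series, and a rational Hilbert series forces polynomial or exponential growth, never intermediate growth (cf.\ Remark~\ref{rem5}). Concretely, with the order $x>y>z>u$ the reduced Gr\"obner basis is infinite: besides $xu-yz$, $yu-zx$, $zu-uz$, $yx$, $xx$ one gets the monomial families $yz^{k+1}xz^k$, $xz^{k+1}xz^k$, $yz^kyz^k$, $xz^kyz^k$ for all $k\in\Z_+$, produced by an unending cascade of overlaps with $zu-uz$ (see (\ref{rgbC})); the same obstruction (non-rational Hilbert series) rules out a finite basis for any other order or generating set. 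So the correct version of step (i) is not termination but identifying this infinite, patterned basis, or equivalently the explicit but non-regular set of normal words (\ref{rgbC2}).

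Your steps (ii)--(iii) are essentially the paper's argument and are sound once the normal words are known: the lower bound comes from exhibiting $2^m$ normal words of degree at most $n$ with $m\approx\sqrt n$, namely $s_1z^{2m-2}s_2z^{2m-4}\cdots z^2s_m$ with $s_i\in\{x,y\}$ (exactly your ``partitions into distinct parts'' idea), and the upper bound from dominating the count by the coefficients of $\prod_{k\geq1}(1+2t^k)$ and a Cauchy-integral estimate on the circle of radius $1-1/\sqrt n$, giving $\ln p(n)=O(\sqrt n)$; together these yield $\kappa(C)=\tfrac12$, hence infinite Gelfand--Kirillov dimension and subexponential growth. Your remark that characteristic-independence is unproblematic is also correct, since all coefficients occurring in the (infinite) reduced Gr\"obner basis are $\pm1$. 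But as written, the proposal's central structural claim (finite Gr\"obner basis, automaton description, and growth read off an explicit $H_C$) would fail, so the proof does not go through without replacing that step by the analysis of the infinite basis and its normal-word pattern.
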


In Section~2 we describe a way to verify that a finitely presented algebra does not possess a finite Gr\"obner basis in its ideal of relations for any choice of generators and ordering. We also do the preparatory work for applying the general result to particular algebras of Theorem~\ref{main}. We study the Gr\"obner bases of members of two varieties of finitely presented algebras containing the algebras $A$ and $B$ of Theorem~\ref{main} in Section~3, which culminates in the proof of Theorem~\ref{main}. We prove Theorem~\ref{inter} in Section~4 and make few extra remarks in the final Section~5.

\section{The IGB-criterion}

Everywhere below we use the following notation. The symbol $\Q$ denotes the field of rational numbers, $\Z$ stands for the ring of integers, $\N$ is the set of positive integers, while $\Z_+=\N\cup\{0\}$ is the set of non-negative integers. If $p$ is a prime number then $\Z_p=\Z/p\Z$ is the $p$-element field. We denote by $\pi_p$ the canonical map $\pi_p:\Z\to\Z_p$: $\pi_p(n)$ is the congruence class of $n$ modulo $p$. We naturally extend $\pi_p$ to a ring homomorphism
$$
\pi_p:\Z\langle x_1,\dots,x_n\rangle \to \Z_p\langle x_1,\dots,x_n\rangle
$$
(we use the same symbol to denote the extension) by acting on coefficients. That is, for $f\in \Z\langle x_1,\dots,x_n\rangle$, $\pi_p(f)$ is obtained from $f$ by replacing each coefficient of $f$ by its congruence class modulo $p$. We would like to introduce the following technical notions.

\begin{definition}\label{frr} Let $\K$ be a field and $R_0$ be a subring of $\K$. We say that $R$ is a \textbf{\textit{finite ring extension of $R_0$ in $\K$}} if $R$ is a subring of $\K$,  $R_0\subseteq R$ and there is a finite set $S\subset R$ such that the set $R_0\cup S$ generates $R$ as a ring.
\end{definition}

\begin{definition}\label{grme} We say that a map $A\mapsto \gamma(A)$, which assigns to a finitely generated algebra $A$ a number $\gamma(A)\in[0,\infty]$ is a \textbf{\textit{growth measuring function}} if the following conditions are satisfied:
\begin{itemize}\itemsep=-3pt
\item $\gamma(A)=\gamma(A_\F)$ for every field $\F$ containing the ground field $\K$ of $A$ as a subfield$;$
\item if $A$ and $B$ are two finitely generated algebras (possibly over different fields) and there exist finite generating sets $X$ and $Y$ for $A$ and $B$ respectively such that the corresponding Poincar\'e series $P_A^X=\sum\alpha_nt^n$ and $P_B^Y=\sum\beta_nt^n$ satisfy $\alpha_n\leq\beta_n$ for all $n\in\Z_+$, then $\gamma(A)\leq\gamma(B)$.
\end{itemize}
\end{definition}

\begin{remark}\label{grme1} Note that the Gelfand--Kirillov dimension ${\rm GKdim}(A)$ as well as $\kappa(A)$ defined in (\ref{kappa}) are growth measuring functions. The dimension $\dim A$ of a finitely generated $\K$-algebra $A$ as a $\K$-vector space is a growth measuring function as well. One can come up with quite a few other examples.
\end{remark}

\begin{theorem}\label{nofgb}
Let $(X,F)$ be a finite presentation of a $\K$-algebra $A$ and let $R_0$ be a unital subring of $\K$ such that each $f\in F$ belongs to $R_0\langle X\rangle$. That is, all coefficients of each $f\in F$ are in $R_0$. Assume also that we have a collection of non-zero ring homomorphisms $\rho_\alpha:R_0\to \K_\alpha$ labelled by $\alpha$ from some index set $\Lambda$ such that each $\K_\alpha$ is a field. We extend every $\rho_\alpha$ to a ring homomorphism $\rho_\alpha:R_0\langle X\rangle\to \K_\alpha\langle X\rangle$ by the original $\rho_\alpha$ acting on coefficients. For each $\alpha\in\Lambda$, let $A_\alpha$ be the $\K_\alpha$-algebra defined by the presentation $(X,F_\alpha)$, where $F_\alpha=\rho_\alpha(F)$. Finally, assume that $\gamma$ is a growth measuring function and the following conditions are satisfied$:$
\begin{itemize}\itemsep=-2pt
\item[\rm (1)] $\gamma(A_\alpha)>\gamma(A)$ for every $\alpha\in\Lambda;$
\item[\rm (2)] for every field $\F$ containing $\K$ as a subfield and for every finite ring extension $R$ of $R_0$ in $\F$, there exists $\alpha\in\Lambda$ such that $\rho_\alpha$ extends to a ring homomorphism $\rho'_\alpha:R\to \F_\alpha$, where $\F_\alpha$ is some field containing $\K_\alpha$ as a subfield.
\end{itemize}
Then $A$ is an IGB-algebra.
\end{theorem}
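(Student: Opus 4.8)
I will argue by contraposition: assuming $A$ is an FGB-algebra, I produce an index $\alpha\in\Lambda$ with $\gamma(A_\alpha)\leq\gamma(A)$, contradicting~(1). By Definition~\ref{nogb} there are a field $\F\supseteq\K$, a finite generating set $Y$ of $A_\F$, a well-ordering $\leq$ on $\langle Y\rangle$ compatible with multiplication, and a finite Gr\"obner basis of the ideal of relations of $A_\F$ with respect to $(Y,\leq)$. Since the existence of a finite Gr\"obner basis forces the reduced one to be finite, we may take $G$ finite, reduced and monic. Write $\overline G=\{\overline g:g\in G\}$ and let $N\subseteq\langle Y\rangle$ be the set of normal words of $A_\F$, i.e.\ the words having no subword in $\overline G$; recall $N$ is a basis of $A_\F$.

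\noindent\textbf{Specializing the Gr\"obner basis.} Fix lifts $p_y\in\F\langle X\rangle$ of the generators $y\in Y\subseteq A_\F$ and elements $q_x\in\F\langle Y\rangle$ representing the generators $x\in X\subseteq A_\F$, and set $\hat f:=f(q_{x_1},\dots,q_{x_n})$ and $e_y:=y-p_y(q_{x_1},\dots,q_{x_n})$. A routine change-of-generators computation shows that the ideal of relations of $A_\F$ with respect to $Y$ is generated by $\{\hat f:f\in F\}\cup\{e_y:y\in Y\}$, that $x-q_x(p_{y_1},\dots)\in I$ for each $x\in X$, and hence that every $g\in G$ is an explicit finite $\F$-combination $\sum_j c_j\,a_j\,r_j\,b_j$ with $a_j,b_j\in\langle Y\rangle$ and each $r_j$ one of the $\hat f$ or $e_y$. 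Let $R$ be the subring of $\F$ generated by $R_0$ together with all coefficients occurring in the $p_y$, the $q_x$, the $g\in G$, and in these ideal-membership expressions; then $R$ is a finite ring extension of $R_0$ in $\F$, so by~(2) there are $\alpha\in\Lambda$ and a ring homomorphism $\rho'_\alpha:R\to\F_\alpha$ extending $\rho_\alpha$, with $\F_\alpha\supseteq\K_\alpha$ a field, and $\rho'_\alpha$ is unital (being a nonzero homomorphism into a field). Applying $\rho'_\alpha$ throughout, and using that a unital ring homomorphism commutes with substitution of variables, the analogous computation over $\F_\alpha$ identifies $(A_\alpha)_{\F_\alpha}$ (presented over $X$ by $\rho_\alpha(F)$) with $\F_\alpha\langle Y\rangle$ modulo the ideal $J_\alpha$ generated by the $\rho'_\alpha$-images of the $\hat f$ and $e_y$, the classes of the $y\in Y$ forming a generating set. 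Applying $\rho'_\alpha$ to $g=\sum_j c_j a_j r_j b_j$ gives $\widehat G:=\rho'_\alpha(G)\subseteq J_\alpha$, and each $\rho'_\alpha(g)$ is monic with leading monomial $\overline g$; hence the set of leading monomials of $J_\alpha$ contains $\overline G$, so the normal words $N_\alpha$ of $(A_\alpha)_{\F_\alpha}$ (for this generating set and $\leq$) satisfy $N_\alpha\subseteq N$. (In fact $J_\alpha=(\widehat G)$ by the correspondingly specialized expressions, so we are simply looking at the coefficientwise specialization $\F\langle Y\rangle/(G)\rightsquigarrow\F_\alpha\langle Y\rangle/(\widehat G)$ of a finite Gr\"obner basis.)

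\noindent\textbf{Comparing growth.} Reduction of a word $w\in\langle Y\rangle$ modulo $\widehat G$ follows exactly the same sequence of rewriting steps as reduction modulo $G$ — these steps depend only on which words contain which $\overline g$, and all leading coefficients equal $1$ — the only difference being that some monomials may vanish after applying $\rho'_\alpha$. Hence the image of $w$ in $(A_\alpha)_{\F_\alpha}$ lies in the $\F_\alpha$-span of the classes of the words in $\mathrm{supp}(\mathrm{nf}_G(w))\subseteq N$. Taking the classes of $Y$ as the generating set for $(A_\alpha)_{\F_\alpha}$, it follows that $\bigl((A_\alpha)_{\F_\alpha}\bigr)^{(n)}$ is contained in the span of the classes of the words in the finite set $S_n:=\bigcup_{|w|\leq n}\mathrm{supp}(\mathrm{nf}_G(w))\subseteq N$, so $\dim\bigl((A_\alpha)_{\F_\alpha}\bigr)^{(n)}\leq|S_n|$. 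Every word in $S_n$ has length at most $Cn$ for a constant $C$ depending only on $G$ ($C=1$ when $\leq$ is degree-compatible; in general, reduction modulo a finite reduced Gr\"obner basis increases a word's degree by at most a bounded multiplicative factor, since the trailing monomials of the $g\in G$ have bounded length). As distinct normal words are linearly independent in $A_\F$ and words of length $\leq Cn$ have their classes in $(A_\F)^{(Cn)}_Y$, we get $|S_n|\leq\dim(A_\F)^{(Cn)}_Y=\dim(A_\F)^{(n)}_W$, where $W$ is the finite generating set of $A_\F$ formed by the classes of all words of length $\leq C$. Therefore
$$\dim\bigl((A_\alpha)_{\F_\alpha}\bigr)^{(n)}\ \leq\ |S_n|\ \leq\ \dim(A_\F)^{(n)}_W\qquad\text{for every }n\in\Z_+,$$
i.e.\ the Poincar\'e series of $(A_\alpha)_{\F_\alpha}$ for this generating set is coefficientwise dominated by that of $A_\F$ for $W$. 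By the two defining properties of a growth measuring function this yields $\gamma(A_\alpha)=\gamma\bigl((A_\alpha)_{\F_\alpha}\bigr)\leq\gamma(A_\F)=\gamma(A)$, contradicting~(1). Hence $A$ is an IGB-algebra.

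\noindent\textbf{Where the difficulty lies.} The main obstacle is the middle step: isolating all the data needed to transport the Gr\"obner basis — the change of generators between $X$ and $Y$, and the certificates that $G$ lies in the appropriate ideal — into a \emph{single} finite ring extension of $R_0$, and verifying that a mere ring homomorphism $\rho'_\alpha$ carries all of these identities over faithfully. A secondary technical point is the degree bookkeeping in the last step for a possibly non-degree-compatible ordering; this is precisely where the \emph{finiteness} (and reducedness) of the Gr\"obner basis is used, to bound the length distortion under reduction and to justify rescaling the generating set.
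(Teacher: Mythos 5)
Your first two steps follow the paper's route and are essentially correct: you collect, in a single finite ring extension $R$ of $R_0$ inside $\F$, the coefficients of the change of generators between $X$ and $Y$, of the finite reduced (monic) Gr\"obner basis $G$, and of the ideal-membership certificates; hypothesis (2) then gives $\rho'_\alpha$, which is unital, so each $\rho'_\alpha(g)$ keeps the leading monomial $\overline g$, the images of $Y$ generate $B_\alpha=(A_\alpha)_{\F_\alpha}$, and the $\rho'_\alpha(g)$ are relations of $B_\alpha$. (The paper encodes the change of generators by identities in $\K\langle X\rangle$ rather than by your Tietze-style generators $\{\hat f\}\cup\{e_y\}$ of the relation ideal in $\F\langle Y\rangle$; both work, provided --- as you apparently intend --- the certificate expressing $x-q_x(p_{y_1},\dots,p_{y_m})$ as a combination of the $f\in F$ also contributes its coefficients to $R$, since that identity is exactly what makes the classes of $Y$ generate $B_\alpha$ and makes the specialized $\hat f$, $e_y$ vanish there.)

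The genuine gap is in your growth comparison. The claim that reduction modulo a finite reduced Gr\"obner basis increases the length of a word by at most a bounded multiplicative factor $C$ is false for a general well-ordering compatible with multiplication: bounded length of the trailing monomials controls one rewriting step, not the compounding over many steps. Concretely, on $\langle a,b\rangle$ order words first by the number of occurrences of $a$ and break ties by comparing the tuple of lengths of the blocks of $b$'s lexicographically from the right (rightmost block most significant); this is a well-ordering compatible with multiplication for which $ab>b^2a$, the set $G=\{ab-b^2a\}$ is a reduced Gr\"obner basis (no overlaps), and yet the normal form of $a^nb$ is $b^{2^n}a^n$, of exponential length. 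Hence no constant $C$ exists, and your set $S_n$, the generating set $W$ and the inequality $|S_n|\leq\dim(A_\F)^{(n)}_W$ all collapse. The repair needs no length bookkeeping and is what the paper does: compare with the \emph{same} generating set $Y$ and the same $n$. Since the coefficients of $G$ lie in $R$ and $G$ is monic, for every word $w$ of length at most $n$ the normal form ${\rm nf}_G(w)$ has coefficients in $R$ and the image of $w$ in $B_\alpha$ is the class of $\rho'_\alpha({\rm nf}_G(w))$, a combination of images of normal words; as the normal words are linearly independent in $A_\F$, $\dim(A_\F)^{(n)}_Y$ equals the rank of the finite coefficient matrix $(c^w_u)$ with $|w|\leq n$ and $u$ normal, while $\dim B_\alpha^{(n)}$ is at most the rank of its entrywise $\rho'_\alpha$-image, and applying a ring homomorphism cannot raise rank (vanishing minors stay vanishing). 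This gives $r_n\leq s_n$ coefficientwise for $Y$ itself --- the inequality the paper asserts from ``the normal words of $A$ span $B_\alpha$'' --- after which your appeal to the two axioms of a growth measuring function contradicts (1) exactly as in the paper.
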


\begin{proof} Assume the contrary: $A$ is an FGB-algebra. Then there is a field $\F$ containing $\K$ as a subfield such that $A_\F$ has a finite generating set $Y$ and there is a well-ordering on $\langle Y\rangle$ compatible with multiplication for which the ideal of relations of $A_\F$ corresponding to the generating set $Y$ has finite reduced Gr\"obner basis $G=\{g_1,\dots,g_t\}$. If we replace the original field $\K$ by $\F$, nothing really changes except for $A$ being replaced by $A_\F$: $(X,F)$ is still a presentation of $A_\F$ as an $\F$-algebra, $\gamma(A_\F)=\gamma(A)$, $A_\alpha$ do not change at all and (2) remains true. Thus without loss of generality, we can assume that $\F=\K$ and $A_\F=A$. Let $X=\{x_1,\dots,x_n\}$, $Y=\{y_1,\dots,y_m\}$ and $F=\{f_1,\dots,f_s\}$. Since both $X$ and $Y$ are generating sets in $A$, there exist ${\widehat x}_j\in \K\langle t_1,\dots,t_m\rangle$ and ${\widehat y}_k\in \K\langle s_1,\dots,s_n\rangle$ such that $x_j={\widehat x}_j(y_1,\dots,y_m)$ in $A$ for each $1\leq j\leq n$ and $y_k={\widehat y}_k(x_1,\dots,x_n)$ in $A$ for each $1\leq k\leq m$. It follows that
\begin{equation}\label{xj}
x_j{-}{\widehat x}_j({\widehat y}_1(x_1,\dots,x_n),\dots,{\widehat y}_m(x_1,\dots,x_n))=\!\!\sum_i a_{i,j}u_{i,j}f_{r(i,j)}(x_1,\dots,x_n)v_{i,j}\ \text{in $\K\langle X\rangle$}
\end{equation}
for $1\leq j\leq n$, where the sum in the right-hand side is finite, $u_{i,j},v_{i,j}\in \langle X\rangle$ and $a_{i,j}\in\K^*$.

Since each $g_k$ is a relation for $A$ with respect to the generating set $Y$, $g_k(y_1,\dots,y_m)=0$ in $A$ and therefore $g_k({\widehat y}_1(x_1,\dots,x_n),\dots,{\widehat y}_m(x_1,\dots,x_n))=0$ in $A$. This happens precisely when
\begin{equation}\label{phi1}
g_k({\widehat y}_1(x_1,\dots,x_n),\dots,{\widehat y}_m(x_1,\dots,x_n))=\sum_i c_{i,k}q_{i,k}f_{j(i,k)}(x_1,\dots,x_n)w_{i,k}\ \ \text{in $\K\langle X\rangle$},
\end{equation}
for $1\leq k\leq t$, where the sum in the right-hand side is finite, $q_{i,k},w_{i,k}\in \langle X\rangle$ and $c_{i,k}\in\K^*$.

Now consider the finite set $S\subset\K$ comprising all (non-zero) coefficients of all $g_k$, all (non-zero) coefficients of all ${\widehat x}_j$ and all ${\widehat y}_k$, all coefficients $a_{i,j}$ from (\ref{xj}) and all coefficients $c_{i,k}$ from (\ref{phi1}). Let $R$ be the subring of $\K$ generated by $R_0\cup S$. Clearly $R$ is a finite ring extension of $R_0$ in $\K=\F$. By (2), there is $\alpha\in \Lambda$, a field $\F_\alpha$ containing $\K_\alpha$ as a subfield and a ring homomorphism $\rho'_\alpha:R\to \F_\alpha$ such that $\rho'_\alpha\bigr|_{R_0}=\rho_\alpha$. Let $B_\alpha=(A_\alpha)_{\F_\alpha}$. That is, $B_\alpha$ is obtained from $A_\alpha$ by extending the ground field from $\K_\alpha$ to $\F_\alpha$. Hence $\gamma(B_\alpha)=\gamma(A_\alpha)$ and therefore $\gamma(B_\alpha)>\gamma(A)$ by (1). Note that as an $\F_\alpha$-algebra, $B_\alpha$ is presented by $(X,F_\alpha)$.

As we have done with $\rho_\alpha$, we can treat $\rho'_\alpha$ as a homomorphism from $R\langle X\rangle$ to $\F_\alpha\langle X\rangle$. The way we defined $R$ yields that both (\ref{xj}) and (\ref{phi1}) consist of equalities between elements of $R\langle X\rangle$. Since $\rho'_\alpha(f_r)=\rho_\alpha(f_r)$ is a relation for $B_\alpha$ with respect to the generating set $X$, after the application of $\rho'_\alpha$, the right-hand sides of both (\ref{xj}) and (\ref{phi1}) vanish as elements of $B_\alpha$. Consider $\rho'_\alpha({\widehat y}_k)(x_1,\dots,x_n)$ for $1\leq k\leq m$ as elements of $B_\alpha$ and denote them by $y_k$ again. Applying $\rho'_\alpha$ to both sides of (\ref{xj}), we see that
$$
x_j{-}\rho'_\alpha({\widehat x}_j)(y_1,\dots,y_m)=0\ \text{in $B_\alpha$ for $1\leq j\leq n$.}
$$
It follows that $Y=\{y_1,\dots,y_m\}$ is a generating set for $B_\alpha$. Now applying $\rho'_\alpha$ to both sides of (\ref{phi1}), we see that
\begin{equation}\label{asa}
\rho'_\alpha(g_k)(y_1,\dots,y_m)=0\ \text{in $B_\alpha$ for $1\leq k\leq t$.}
\end{equation}
Since $\rho_\alpha$ is non-zero, $R_0$ is unital and $\rho'_\alpha$ extends $\rho_\alpha$, we have $\rho'_\alpha(1)=1$. Keeping $\langle Y\rangle$ equipped with the same ordering $\leq$ for which $G$ is the reduced Gr\"obner basis of the ideal of relations of $A$, we see that the equality  $\rho'_\alpha(1)=1$ implies that the leading monomial of each $g_k$ is the same as the leading monomial of $\rho'_\alpha(g_k)$. By (\ref{asa}), each $\rho'_\alpha(g_k)$ is a relation for $B_\alpha$ with respect to the generating set $Y$. It follows that the normal words for $A$ with respect to $(\langle Y\rangle,\leq)$, span $B_\alpha$ as an $\F_\alpha$-vector space. Hence the Poincar\'e series $P_A=\sum s_nt^n$ of $A$ and $P_{B_\alpha}=\sum r_nt^n$ of $B_\alpha$ with respect to the generating set $Y$ satisfy $r_n\leq s_n$ for all $n\in\Z_+$. Thus $\gamma(B_\alpha)\leq \gamma(A)$, which contradicts the earlier inequality $\gamma(B_\alpha)>\gamma(A)$.
\end{proof}

The only growth measuring function we shall apply Theorem~\ref{nofgb} with is the Gelfand--Kirillov dimension ${\rm GKdim}$. We gave Theorem~\ref{nofgb} in its full generality on the off-chance that using it with other growth measuring functions may happen in the future. In the proof of Theorem~\ref{main}, we are going to apply Theorem~\ref{nofgb} in the particular case $R_0=\Z$. In order to do this smoothly we need the following commutative lemma. We also need two well-known facts. First,
\begin{equation}\label{fgg}
\text{a subgroup of a finitely generated abelian group is finitely generated..}
\end{equation}
This easily follows from the fact that every finitely generated abelian group is isomorphic to a direct product of finitely many cyclic groups. Our second tool is the weak Hilbert Nullstellensatz:
\begin{equation}\label{hns}
\begin{array}l\text{if $\K$ is a field, $\F$ is an algebraically closed field containing $\K$ as a subfield}\\ \text{and $I$ is a proper ideal in the polynomial $\K$-algebra $\K[x_1,\dots,x_n]$,}\\ \text{then there is $(a_1,\dots,a_n)\in\F^n$ such that $f(a_1,\dots,a_n)=0$ for all $f\in I$.}\end{array}
\end{equation}

\begin{lemma}\label{commu} Let $R$ be a non-zero unital commutative finitely generated ring such that the group $(R,+)$ is torsion-free. Let $E$ be the set of all prime numbers $p$ such that $1=py$ for some $y\in R$. Then $E$ is finite.
\end{lemma}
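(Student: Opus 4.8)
The plan is to argue that if $E$ were infinite, then $R$ would fail to be finitely generated, contradicting the hypothesis. Write $R$ as $\Z[a_1,\dots,a_m]$ for finitely many elements $a_1,\dots,a_m$; since $(R,+)$ is torsion-free and $R$ is nonzero and unital, the canonical map $\Z\to R$ is injective, so I identify $\Z$ with its image and view $R$ as a (possibly infinitely generated) $\Z$-module. First I would observe that for $p\in E$ the relation $1=py$ means that $\tfrac1p$ (or rather $y$) lies in $R$, so $R$ contains the subring generated by the elements $1/p$ for $p\in E$; in particular, if $E$ is infinite, $R$ contains a copy of the localization $\Z_{(S)}$ where $S$ is the set of products of primes from $E$, hence $R\supseteq \Z\bigl[\tfrac1p : p\in E\bigr]$ as a subring, and this latter ring has $\Q$-dimension growing unboundedly.

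The cleanest route to the contradiction is to pass to the field of fractions. Since $(R,+)$ is torsion-free, $R$ embeds in $R\otimes_\Z\Q=:K$, which is a finitely generated commutative $\Q$-algebra. For each $p\in E$, the equation $1=py$ in $R$ forces $y=1/p$ in $K$; hence $K$ contains $1/p$ for every $p\in E$. I would now consider the maximal spectrum of $K$: being a finitely generated $\Q$-algebra, $K$ is a Jacobson ring, and I can find (using the weak Nullstellensatz \eqref{hns} applied to a presentation $K\cong \Q[x_1,\dots,x_N]/J$ with $J$ proper) a maximal ideal $\mathfrak m\subset K$ with residue field a finite extension $L/\Q$. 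The composite $R\hookrightarrow K\twoheadrightarrow L$ is a ring homomorphism into a number field; call it $\psi$. Then for every $p\in E$ we get $\psi(1)=p\,\psi(y)$ in $L$, i.e. $\psi(y)=1/p\in L$, so $1/p$ is an algebraic integer divided by nothing problematic — more precisely, $1/p$ lies in the ring of integers $\mathcal O_L$ only for finitely many $p$. Concretely: $\psi(R)$ is a finitely generated subring of $L$ (being a homomorphic image of the finitely generated ring $R$), so $\psi(R)\subseteq \mathcal O_L[1/d]$ for some nonzero integer $d$; but then $1/p\in\mathcal O_L[1/d]$ forces $p\mid d$, which can happen for only finitely many primes $p$. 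This bounds $E$ and finishes the argument.

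Alternatively, and perhaps more self-containedly, one can avoid number fields: write $R=\Z[a_1,\dots,a_m]$ and let $N$ be the subring of $R$ generated by $\Z$ together with finitely many denominators — but since we do not know $E$ is finite a priori we cannot pick these denominators in advance, which is exactly the subtlety. So instead I would use \eqref{fgg}: the additive group of $R$ need not be finitely generated, but I can still argue via a quotient. The key step — and the main obstacle — is precisely producing a ring homomorphism from $R$ to a ring where "containing $1/p$ for infinitely many $p$" is visibly impossible; the Nullstellensatz-based reduction to a number field, as above, is the way I would handle it. Once that reduction is in place, the finiteness of $E$ is immediate from the elementary fact that a finitely generated subring of a number field is contained in $\mathcal O_L[1/d]$ for a single $d$, together with the observation that $p\mid d$ for each $p\in E$.
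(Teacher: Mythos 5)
Your proof is correct and takes essentially the same route as the paper: both exploit torsion-freeness to pass to a presentation of $R\otimes_\Z\Q$ as a finitely generated $\Q$-algebra with proper defining ideal, apply the weak Nullstellensatz (\ref{hns}) to obtain a ring homomorphism from $R$ into the algebraic numbers, and then argue that the image, being a finitely generated subring, can contain $1/p$ for only finitely many primes $p$. The only divergence is in that last denominator-bounding step: the paper does it by hand, scaling the coordinates to algebraic integers and using (\ref{fgg}) plus the cyclicity of finitely generated subgroups of $(\Q,+)$, while you invoke the standard facts that a finitely generated subring of a number field $L$ lies in $\mathcal{O}_L[1/d]$ for a single $d$ and that $\mathcal{O}_L\cap\Q=\Z$ --- a slightly slicker finish at the cost of marginally heavier machinery, but entirely valid.
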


\begin{proof}Let $\{x_1,\dots,x_n\}$ be a generating set for $R$. Then $R$ is naturally isomorphic to $\Z[t_1,\dots,t_n]/I$, where $I$ is the ideal consisting of all $f$ such that $f(x_1,\dots,x_n)=0$ in $R$. Since $(R,+)$ is torsion-free, there is a unique ideal $J$ in the $\Q$-algebra $\Q[t_1,\dots,t_n]$ such that $I=\Z[t_1,\dots,t_n]\cap J$. Since $R$ is unital, $I$ is proper and therefore $J$ is proper. Since the field $\aaa$ of algebraic (over $\Q$) numbers is algebraically closed and contains $\Q$, (\ref{hns}) provides $\alpha_1,\dots,\alpha_n\in\aaa$ such that $f(\alpha_1,\dots,\alpha_n)=0$ for every $f\in J$. Now we have
\begin{equation}\label{one}
p\in E\,\Longrightarrow\, \text{there exists $f\in \Z[t_1,\dots,t_n]$ such that $\textstyle f(\alpha_1,\dots,\alpha_n)=\frac1p$.}
\end{equation}
Pick $k\in\N$ such that $\beta_j=k\alpha_j$ is an algebraic integer for $1\leq j\leq n$. For instance, as $k$ we can take the product of leading coefficients of the minimal polynomials for $\alpha_j$. Obviously,
\begin{equation}\label{two}
\text{for each $f\in \Z[t_1,\dots,t_n]$, there is $g\in \Z[t_1,\dots,t_n]$ such that $\textstyle f(\alpha_1,\dots,\alpha_n)=\frac{g(\beta_1,\dots,\beta_n)}{k^m}$},
\end{equation}
where $m$ is the degree of $f$. Combining (\ref{one}) and (\ref{two}), we get
\begin{equation}\label{three}
p\in E\,\Longrightarrow\, \text{there exist $g\in \Z[t_1,\dots,t_n]$ and $m\in\N$ such that $\textstyle g(\beta_1,\dots,\beta_n)=\frac{k^m}p$.}
\end{equation}

Now consider the set
$$
G=\{g(\beta_1,\dots,\beta_n):g\in \Z[t_1,\dots,t_n]\}.
$$
Obviously, $G$ is a subgroup of $(\aaa,+)$. Since each $\beta_j$ is an algebraic integer, $\beta_j^{m_j}$ is a linear combination with integer coefficients of $1,\beta_j,\dots,\beta_j^{m_j-1}$, where $m_j$ is the degree of $\beta_j$. Hence the set $G$ does not change if in its definition we use only $g\in \Z[t_1,\dots,t_n]$ such that $\deg_{t_j}g<m_j$ for $1\leq j\leq n$. Hence the group $(G,+)$ is finitely generated. By (\ref{fgg}), the group $(G\cap \Q,+)$ is also finitely generated. Since every finitely generated subgroup of $(\Q,+)$ is cyclic, $G\cap \Q$ as an additive group is generated by a fraction $\frac{a}{b}$ with $a\in\Z$, $b\in\N$. By (\ref{three}), for every $p\in E$, there is $r_p\in\Z_+$ such that $\frac{k^{r_p}}{p}\in G\cap Q$. On the other hand, a number of the form $\frac{k^m}{p}$ with $p$ being a prime and $m\in\Z_+$ can only belong to $G\cap Q$ if $p$ divides $N=kb$. Since $N$ has only finitely many prime factors, $E$ is finite.
\end{proof}

\begin{corollary}\label{commu1} Let $R$ be a non-zero unital commutative finitely generated ring such that the group $(R,+)$ is torsion-free. Let $E$ be the set of all prime numbers $p$ such that there is no non-zero ring homomorphisms from $R$ to the algebraic closure $\F_p$ of the field $\Z_p$. Then $E$ is finite.
\end{corollary}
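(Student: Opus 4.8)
The plan is to deduce Corollary~\ref{commu1} from Lemma~\ref{commu} by showing that the set $E$ appearing in the corollary is literally the same as the set $E$ appearing in the lemma, namely $\{p\ \text{prime}:\ 1=py\ \text{for some}\ y\in R\}$.

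As in the proof of Lemma~\ref{commu}, I would fix a finite generating set $\{x_1,\dots,x_n\}$ of $R$ and write $R=\Z[t_1,\dots,t_n]/I$, where $I$ is the ideal of all $f$ with $f(x_1,\dots,x_n)=0$ in $R$. Fix a prime $p$ and write $\F_p=\overline{\Z_p}$ for the algebraic closure of $\Z_p$. The key observation is a \emph{dictionary}: non-zero ring homomorphisms $R\to\F_p$ are in bijection with the common zeros in $\F_p^n$ of the reduced ideal $\pi_p(I)\subseteq\Z_p[t_1,\dots,t_n]$. Indeed, a non-zero ring homomorphism into a field is unital (a non-zero idempotent of a field equals $1$), so composing with the quotient map $\Z[t_1,\dots,t_n]\twoheadrightarrow R$ yields a unital ring homomorphism $\Psi\colon\Z[t_1,\dots,t_n]\to\F_p$ which kills $I$ and restricts on $\Z$ to $\Z\xrightarrow{\pi_p}\Z_p\hookrightarrow\F_p$; such a $\Psi$ is exactly the datum of the point $a=(\Psi(t_1),\dots,\Psi(t_n))\in\F_p^n$, and the condition $\Psi(I)=0$ says precisely that $(\pi_p f)(a)=0$ for all $f\in I$. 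Conversely, every common zero of $\pi_p(I)$ in $\F_p^n$ determines such a $\Psi$ and hence, by passing to the quotient by $I$, a non-zero homomorphism $R\to\F_p$.

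Next I would apply the weak Hilbert Nullstellensatz (\ref{hns}) with $\K=\Z_p$ and the algebraically closed extension $\F=\F_p$: if the ideal $\pi_p(I)$ is proper in $\Z_p[t_1,\dots,t_n]$ it has a common zero in $\F_p^n$, and if it is not proper it plainly has none. Combined with the dictionary above, this gives: $p\in E$ (in the sense of the corollary) if and only if $\pi_p(I)=\Z_p[t_1,\dots,t_n]$, that is $1\in\pi_p(I)$. Since $\Z_p[t_1,\dots,t_n]=\Z[t_1,\dots,t_n]/p\Z[t_1,\dots,t_n]$ and $\pi_p(I)=(I+p\Z[t_1,\dots,t_n])/p\Z[t_1,\dots,t_n]$, this is equivalent to $1\in I+p\Z[t_1,\dots,t_n]$, i.e.\ to the existence of $g\in\Z[t_1,\dots,t_n]$ with $1-pg\in I$. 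Reducing this congruence modulo $I$ inside $R=\Z[t_1,\dots,t_n]/I$ shows it is equivalent to $1=py$ with $y$ the image of $g$. Hence the set $E$ of the corollary coincides with the set $E$ of Lemma~\ref{commu}, which is finite by that lemma; this finishes the proof.

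Since the whole argument is a translation between ring homomorphisms and points of a variety over $\Z_p$, I do not anticipate a genuine obstacle. The one point that wants a little care is checking that a non-zero homomorphism $R\to\F_p$ is automatically unital and restricts to $\pi_p$ on $\Z$, so that the Nullstellensatz over the \emph{prime} field $\Z_p$ (rather than over $\Q$, as used in Lemma~\ref{commu}) is exactly the right instrument, and so that the torsion-freeness of $(R,+)$ enters the proof only through the invocation of Lemma~\ref{commu}.
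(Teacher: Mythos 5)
Your proposal is correct and follows essentially the same route as the paper: both identify $E$ with the set of primes for which $\pi_p(I)=\Z_p[t_1,\dots,t_n]$ via the weak Nullstellensatz over $\Z_p$, translate that condition into $1=py$ in $R$, and then invoke Lemma~\ref{commu}. The only difference is that you spell out in more detail the dictionary between non-zero homomorphisms $R\to\F_p$ and common zeros of $\pi_p(I)$, which the paper treats as immediate.
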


\begin{proof} Let $S=\{s_1,\dots,s_n\}$ be generators of $R$. Consider the ideal $I$ in $\Z[t_1,\dots,t_n]$ consisting of all $f$ such that $f(s_1,\dots,s_n)=0$. It is clear that $p\in E$ if $\pi_p(I)=\Z_p[t_1,\dots,t_n]$. On the other hand, if $\pi_p(I)\neq \Z_p[t_1,\dots,t_n]$, (\ref{hns}) provides $\alpha_1,\dots,\alpha_n\in\F_p$ such that $f(\alpha_1,\dots,\alpha_n)=0$ whenever $f\in I$. Now the map $s_j\mapsto \alpha_j$ extends to a homomorphism from $R$ to $\F_p$ yielding $p\notin E$. Thus $E$ is exactly the set of primes $p$ for which $\pi_p(I)=\Z_p[t_1,\dots,t_n]$. The latter equality is equivalent to $1\in \pi_p(I)$, which in turn, is equivalent to $1=py$ in $R$ for some $y\in R$. By Lemma~\ref{commu}, $E$ is finite.
\end{proof}

Now we state and prove a particular case of Theorem~\ref{nofgb} corresponding to $R_0=\Z$ and the growth measuring function $\gamma$ being the Gelfand--Kirillov dimension ${\rm GKdim}$.

\begin{theorem}\label{fgb} Let $A$ be an algebra over a field $\K$ of characteristic zero given by a finite presentation $(X,F)$ such that each defining relation $(=$an element of $F)$ has integer coefficients. For each prime number $p$, let $A_p$ be the $\Z_p$-algebra defined by the presentation $(X,F_p)$, where $F_p=\pi_p(F)=\{\pi_p(f):f\in F\}$. Assume also that
${\rm GKdim}(A_p)>{\rm GKdim}(A)$ for infinitely many prime numbers $p$. Then $A$ is an IGB-algebra.
\end{theorem}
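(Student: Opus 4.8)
The plan is to read Theorem~\ref{fgb} off from Theorem~\ref{nofgb} by making the right choices and checking the two hypotheses. Take $R_0=\Z$; since $\K$ has characteristic zero, $\Z$ is a unital subring of $\K$, and the assumption that every $f\in F$ has integer coefficients is exactly $F\subset R_0\langle X\rangle$. Take the growth measuring function to be ${\rm GKdim}$, which is legitimate by Remark~\ref{grme1}. As index set take $\Lambda=\{p\ \text{prime}:{\rm GKdim}(A_p)>{\rm GKdim}(A)\}$, which is infinite by assumption, and for $p\in\Lambda$ let $\rho_p=\pi_p:\Z\to\Z_p$, a non-zero ring homomorphism, so that $\K_p=\Z_p$; then $\rho_p(F)=\pi_p(F)=F_p$ and the $\K_p$-algebra attached to $\rho_p$ in Theorem~\ref{nofgb} is precisely the algebra $A_p$ of the present statement. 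With these choices condition (1) of Theorem~\ref{nofgb} holds by the very definition of $\Lambda$, so everything reduces to verifying condition (2).

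For condition (2), fix a field $\F\supseteq\K$ and a finite ring extension $R$ of $\Z$ in $\F$. Then $R$ is a non-zero unital commutative finitely generated ring, and, being a subring of the characteristic zero field $\F$, its additive group is torsion-free; hence Corollary~\ref{commu1} applies and the set $E$ of primes $p$ for which there is no non-zero ring homomorphism from $R$ to $\F_p$, the algebraic closure of $\Z_p$, is finite. Since $\Lambda$ is infinite, $\Lambda\setminus E\neq\varnothing$; pick $p\in\Lambda\setminus E$, let $\rho'_p:R\to\F_p$ be a non-zero ring homomorphism, and set $\F_\alpha=\F_p$, which is a field containing $\K_p=\Z_p$ as a subfield.

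It remains to check that $\rho'_p$ extends $\rho_p=\pi_p$. The element $\rho'_p(1)$ is idempotent and hence, $\F_p$ being a field, equals $0$ or $1$; it cannot be $0$, since that would force $\rho'_p\equiv 0$, so $\rho'_p$ is unital. A unital ring homomorphism $R\to\F_p$ restricts on $\Z\subseteq R$ to the unique unital ring homomorphism $\Z\to\F_p$, which, as $\F_p$ has characteristic $p$, is $\pi_p$ followed by the inclusion $\Z_p\hookrightarrow\F_p$; thus $\rho'_p|_\Z=\rho_p$. Conditions (1) and (2) of Theorem~\ref{nofgb} are therefore both satisfied, and that theorem yields that $A$ is an IGB-algebra.

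Almost everything above is bookkeeping: matching the notation with Theorem~\ref{nofgb}, a pigeonhole step to land in $\Lambda\setminus E$, and the standard remark that a non-zero homomorphism into a field is unital. The only place where genuine content enters is the appeal to Corollary~\ref{commu1}, which rests on Lemma~\ref{commu} --- the arithmetic fact that in a finitely generated torsion-free commutative ring only finitely many primes can become invertible --- and that is where the real difficulty lies; but it has already been dealt with in the excerpt, so here it may simply be quoted.
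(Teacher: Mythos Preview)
Your proof is correct and follows essentially the same route as the paper: set $R_0=\Z$, $\gamma={\rm GKdim}$, $\Lambda=\{p:{\rm GKdim}(A_p)>{\rm GKdim}(A)\}$, $\rho_p=\pi_p$, so that (1) is automatic; then for (2) invoke Corollary~\ref{commu1} to see that only finitely many primes are excluded, pick $p\in\Lambda$ outside this finite set, and note that a non-zero ring homomorphism $R\to\F_p$ must send $1$ to $1$ and hence restricts to $\pi_p$ on $\Z$. Your write-up is a bit more explicit about the unitality step and the pigeonhole, but the argument is the same.
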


\begin{proof} We apply Theorem~\ref{nofgb} with $R_0=\Z$ and the homomorphisms $\rho_\alpha$ being $\pi_p:\Z\to\Z_p$ with the index set $\Lambda$ being the infinite set of all prime numbers $p$ for which ${\rm GKdim}(A_p)>{\rm GKdim}(A)$. Thus condition (1) of Theorem~\ref{nofgb} with $\gamma={\rm GKdim}$ is automatically satisfied. In order to complete the proof, it remains to verify condition (2) of  Theorem~\ref{nofgb}. Let $R$ be a finite ring extension of $\Z$ in a field $\F$ containing $\K$ as a subfield. Then $R$ is a finitely generated ring and $(R,+)$ is torsion-free.
Since $\Lambda$ is infinite, Corollary~\ref{commu1} provides $p\in \Lambda$ for which there exists a non-zero ring homomorphism $\phi:R\to\F_p$, where $\F_p$ is the algebraic closure of $\Z_p$. Since $\phi\neq 0$, we must have $\phi(1)=1$, which implies that $\phi$ extends $\pi_p$. Thus condition (2) of Theorem~\ref{nofgb} is satisfied as well and the result follows.
\end{proof}

\section{Two varieties of finitely presented algebras}

In this section we compute reduced Gr\"obner bases of some finitely presented algebras. The algebras we deal with are degree graded. The graded case is much better than the general one in terms of computing the Gr\"obner basis since the elements of the reduced Gr\"obner basis are homogeneous and the degree $n$ elements arise only from resolving the degree $n$ overlaps of the leading monomials (aka obstructions, aka ambiguities) of elements computed so far. Hence the Buchberger algorithm climbs up the degrees in this case: for each given $n$, after finitely many steps we may be sure that no new members of the Gr\"obner basis of degree $n$ will appear. In the non-graded case we do not have even this modest luxury.

\begin{example}\label{exa1} Let $\K$ be a field of characteristic other than $2$, $b\in\K$ is such that $b^2\neq 1$, $b\neq 0$ and $a=\frac{b^2-1}{4}$.. Consider the $\K$-algebra $A$ given by generators $x,y,z$ and relations $xy$, $yz$ and $xx-xz-azz$. We equip monomials with the left-to-right degree-lexicographical ordering assuming $x>y>z$. If $s=\frac{1-b}{1+b}$ has infinite order in the group $\K^*$, then the leading monomials of the members of the reduced Gr\"obner basis of the ideal of relations of $A$ are $yz$, $xz^jx$ and $xz^jy$ with $j\in\Z_+$. If $s$ has finite order $m$ in $\K^*$, then the leading monomials are $yz$, $xz^jx$ and $xz^jy$ with $0\leq j\leq m-2$, $xz^{m}$ and $z^{m}(xz^{m-1})^jy$ for $j\in\Z_+$. Furthermore, $H_A(t)=\frac1{(1-t)^{3}}$ if $s$ has infinite order and $H_{A}(t)=\frac{1-t^{m}-t^{m+1}}{(1-t)^2(1-t-t^{m})}$ if $s$ has order $m\in\N$.
\end{example}

\begin{proof} Assume that $k\in\N$ and $s^j\neq 1$ for $1\leq j\leq k$. A direct computation shows that the  $yz$ together with
$$
\textstyle p_{j+1}xz^jx-\frac{p_{j+2}}{2}xz^{j+1}+2ap_jz^{j+1}x-ap_{j+1}z^{j+2},\ \ p_{j+1}xz^jy+2ap_jz^{j+1}y\ \ \text{for $0\leq j\leq k$,}
$$
where $p_j=(1{+}b)^j-(1{-}b)^j$, comprise (up to normalization) all members of degree up to $k+2$ of the reduced Gr\"obner basis of the ideal of relations of $A$. Indeed, relations in the above display for $j=0$ are (up to a non-zero scalar multiple) the defining relations $x^2-xz-az^2$ and $xy$ and we proceed inductively. The degree $j+2$ overlaps $xz^{j-1}x^2=(xz^{j-1}x)x=xz^{j-1}(xx)$ and\break $xz^{j-1}xy=(xz^{j-1}x)y=xz^{j-1}(xy)$ produce (after reduction) the two relations in the above display, while all other overlaps of degree $j+2$ resolve.

If $s$ has infinite order, this works for every $k$ and we are furnished with the complete reduced Gr\"obner basis. The leading monomials of the members of the basis are $yz$, $xz^mx$ and $xz^my$ for $m\in\Z_+$. The corresponding normal words are $z^jy^m$ and $z^jy^mxz^n$ with $j,m,n\in\Z_+$. Counting the number of normal words of any given degree, we arrive to $H_{A}(t)=\frac1{(1-t)^{3}}$.

Now assume that the order of $s$ in $\K^*$ is $k+1$ with $k\in\N$ (note that $s\neq 1$ since $b\neq 0$). The above display together with $yz$ still provides all members of degree up to $k+2$ of the reduced Gr\"obner basis of the ideal of relations of $A$. However, the degree $k+2$ elements are now up to a scalar multiple $z^{k+1}y$ and $xz^{k+1}-z^{k+1}x$. From this point on the pattern of the basis changes dramatically. The overlaps $xz^{2k+1}(xz^k)^jy=[xz^{k+1}]z^k(xz^k)^jy=xz^k[z^{k+1}(xz^k)^jy]$ yield the monomial relation $z^{k+1}(xz^k)^{j+1}y$ for every $j\in\Z_+$ while all other overlaps of degrees $\geq k+3$ resolve. Thus the leading monomials of the members of the reduced Gr\"obner basis of the ideal of relations of $A$ are $yz$, $xz^jx$ and $xz^jy$ with $0\leq j\leq m-2$, $xz^{m}$ and $z^{m}(xz^{m-1})^jy$ for $j\in\Z_+$, where $m=k+1$ is the order of $s$. The corresponding normal words are $z^j$ with $j\in\Z_+$, $z^jy^r$ with $j,r\in\Z_+$, $j<m$, $z^r(xz^{m-1})^jxz^t$ with $r,j,t\in\Z_+$, $t<m$, $r\geq m$ and $z^ry^{m_0}xz^{m-1}y^{m_1}xz^{m-1}y^{m_2}x\dots xz^{m-1}y^{m_j}$, $z^ry^{m_0}xz^{m-1}y^{m_1}xz^{m-1}y^{m_2}x\dots xz^{m-1}y^{m_j}xz^t$ with $j,r,t,m_0,\dots,m_j\in\Z_+$, $r,t<m$. Counting the number of normal words of given degree, we arrive to $H_{A}=\frac{1-t^{m}-t^{m+1}}{(1-t)^2(1-t-t^{m})}$.
\end{proof}

\begin{example}\label{exa2} Let $\K$ be an arbitrary field and $a\in\K^*$. Consider the $\K$-algebra $B$ given by generators $x,y$ and relations $y^3$ and $x^2y{-}ayx^2{-}yxy$. We equip monomials with the left-to-right degree-lexicographical ordering assuming $x>y$. If $1{+}a{+}\dots{+}a^k\neq 0$ for all $k\in\N$, then the leading monomials of the members of the reduced Gr\"obner basis of the ideal of relations of $B$ are $y^3$, $x^2y$ and $y^2(xy)^jxy^2$ with $j\in\Z_+$. If $k\in\N$ is the minimal positive integer for which $1{+}a{+}\dots{+}a^k=0$, then the leading monomials are $y^3$, $x^2y$ and $y^2(xy)^jxy^2$ with $0\leq j<k$ $($finite Gr\"obner basis in this case$)$. In the first case $H_B(t)=\frac{1}{(1+t)(1-t)^{3}}$, while in the second case $H_B(t)=\frac{1-t^{2k+3}}{(1-t)^2(1-t^2-t^{2k+3})}$.
\end{example}

\begin{proof} Whatever the case, a direct computation shows that apart for defining relations the only other degree $\leq 5$ member of the reduced Gr\"obner basis is the monomial $y^2xy^2$, which arises from the overlap $x^2y^3=(x^2y)y^2=x^2(y^3)$. If $1{+}a=0$, the defining relations together with $y^2xy^2$ form the reduced Gr\"obner basis. If $1{+}a\neq 0$, apart from the above three members, the only degree $\leq 7$ member of the reduced Gr\"obner basis is the monomial $y^2xyxy^2$, which arises from the overlap $x^2y^2xy^2=(x^2y)yxy^2=x^2(y^2xy^2)$. If $1{+}a{+}a^2=0$, the process ends (we have the complete basis). Otherwise, apart from the above four members, the only degree $\leq 9$ member of the reduced Gr\"obner basis is the monomial $y^2xyxyxy^2$, which arises from the overlap $x^2y^2xyxy^2=(x^2y)yxyxy^2=x^2(y^2xyxy^2)$. This pattern goes on indefinitely (one can give an easy inductive argument as well). This takes care of the Gr\"obner bases in all cases and of their leading monomials.

Now if $1{+}a{+}\dots{+}a^k\neq 0$ for all $k\in\N$, then as we have just seen, the leading monomials of the members of the reduced Gr\"obner basis of the ideal of relations of $B$ are $y^3$, $x^2y$ and $y^2(xy)^jxy^2$ with $j\in\Z_+$. Hence the normal words of $B$ are $y^\epsilon(xy)^mxy^2(xy)^jx^s$ with $\epsilon\in\{0,1\}$, $m,j,s\in\Z_+$ and $y^{\rm GKdim}(xy)^jx^s$ with ${\rm GKdim}\in\{0,1,2\}$ and $j,s\in\Z_+$. Counting normal words of given degree yields $H_B(t)=\frac{1}{(1+t)(1-t)^{3}}$.

If $k\in\N$ is the minimal positive integer for which $1{+}a{+}\dots{+}a^k=0$, then as we have already seen, the leading monomials of the members of the reduced Gr\"obner basis of the ideal of relations of $B$ are $y^3$, $x^2y$ and $y^2(xy)^jxy^2$ with $0\leq j<k$. Now we have a finite Gr\"obner basis and applying standard techniques \cite{ufn1}, one gets $H_B(t)=\frac{1-t^{2k+3}}{(1-t)^2(1-t^2-t^{2k+3})}$.
\end{proof}

Now we plug $a=2$ into Example~\ref{exa1}.

\begin{lemma}\label{exale} Let $\K$ be a field and $A$ be the quadratic $\K$-algebra given by generators $x,y,z$ and relations $xy$, $yz$ and $x^2{-}xz{-}2z^2$. If ${\rm char}\,\K=0$, then $A$ is an automaton algebra and satisfies ${\rm GKdim}(A)=3$. If ${\rm char}\,\K$ is a prime $p\geq 5$, then $A$ has exponential growth and therefore ${\rm GKdim}(A)=\infty$.
\end{lemma}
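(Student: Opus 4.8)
The plan is to deduce both claims from Example~\ref{exa1} by specialising its parameter to $a=2$. With $a=2$ the auxiliary parameter $b$ of that example must satisfy $b^2=1+4a=9$, so one may take $b=3$, which then gives $s=\frac{1-b}{1+b}=-\frac12$. This specialisation is legitimate as soon as ${\rm char}\,\K\notin\{2,3\}$: one needs ${\rm char}\,\K\neq2$ so that $b^2=9\neq1$ and $a=\frac{b^2-1}{4}$ makes sense, $b=3\neq0$, and — for the second case of Example~\ref{exa1} to be the relevant one — $s=-\frac12\neq1$, which fails precisely when ${\rm char}\,\K=3$. So everything comes down to feeding $b=3$, $s=-\frac12$ into Example~\ref{exa1} and splitting according to the order of $s$ in $\K^*$.

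Suppose ${\rm char}\,\K=0$. Then $\Q\subseteq\K$ and $s^n=(-1)^n2^{-n}\neq1$ for every $n\geq1$, so $s$ has infinite order in $\K^*$. By Example~\ref{exa1}, the reduced Gr\"obner basis of the ideal of relations of $A$ (for the left-to-right degree-lexicographic order with $x>y>z$) has leading monomials $yz$, $xz^jx$ and $xz^jy$ with $j\in\Z_+$, and the normal words for $A$ are exactly the words $z^jy^m$ and $z^jy^mxz^n$ with $j,m,n\in\Z_+$. This language is the union of $z^*y^*$ and $z^*y^*xz^*$, hence regular, so $A$ is an automaton algebra in the sense of Definition~\ref{aual}. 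Moreover $H_A(t)=\frac1{(1-t)^3}$, so the Poincar\'e series is $P_A(t)=\frac1{(1-t)^4}$ and $\alpha_n=\binom{n+3}{3}\sim n^3/6$; therefore ${\rm GKdim}(A)=\limsup_n\frac{\ln\alpha_n}{\ln n}=3$.

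Suppose ${\rm char}\,\K=p\geq5$. Now $s=-2^{-1}$ lies in the prime subfield $\Z_p\subseteq\K$, the group $\Z_p^*$ is finite, and $s\neq1$, so $s$ has finite order $m\geq2$ in $\K^*$. By Example~\ref{exa1}, $H_A(t)=\frac{1-t^m-t^{m+1}}{(1-t)^2(1-t-t^m)}$, and, more usefully, among the normal words listed there we find, for each $j\in\Z_+$ (taking $r=0$, which is allowed since $m\geq2$) the words
$$
y^{m_0}xz^{m-1}y^{m_1}xz^{m-1}\cdots xz^{m-1}y^{m_j},\qquad m_0,\dots,m_j\in\{1,2\}.
$$
These $2^{j+1}$ words are pairwise distinct (the word determines $j$ as the number of occurrences of $x$ and then recovers each $m_i$ as the length of the corresponding maximal $y$-run), and each has degree $\sum_{i=0}^j m_i+jm\leq(m+2)j+2$. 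Hence for all large $n$ one gets $\alpha_n\geq2^{\lfloor(n-2)/(m+2)\rfloor+1}$, so $\limsup_n\frac{\ln\alpha_n}{n}\geq\frac{\ln2}{m+2}>0$ and $A$ has exponential growth; in particular ${\rm GKdim}(A)=\infty$ by Remark~\ref{gek1}. One could instead argue analytically: $1-t-t^m$ has a root $r\in(0,1)$, at which the numerator $1-t^m-t^{m+1}=1-t^m(1+t)$ equals $1-(1-r)(1+r)=r^2\neq0$, so $r$ is a genuine pole of the nonnegative power series $H_A$, forcing the coefficients — and hence the $\alpha_n$ — to grow at rate $1/r>1$.

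The argument is largely bookkeeping on top of Example~\ref{exa1}; the two points requiring a little care are checking that the substitution $b=3$ is admissible in each characteristic (this is exactly why $p=2$ and $p=3$ must be excluded — at $p=3$ one has $s=1$ and the inductive Gr\"obner-basis computation of Example~\ref{exa1} breaks down at the very first overlap), and turning the rational Hilbert series in the case $p\geq5$ into a genuine exponential lower bound for $\alpha_n$, which is handled cleanly by exhibiting the explicit family of $2^{j+1}$ normal words of bounded degree as above.
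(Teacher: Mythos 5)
Your proposal is correct and takes essentially the same route as the paper: both deduce everything by specialising Example~\ref{exa1} to $a=2$ (you pick $b=3$, $s=-\tfrac12$, the paper picks $b=-3$, $s=-2$; immaterial, since $-\tfrac12$ and $-2$ have the same multiplicative order), excluding characteristics $2$ and $3$ for exactly the reasons you state. The only cosmetic differences are that you certify exponential growth for ${\rm char}\,\K=p\geq 5$ by exhibiting the explicit family of $2^{j+1}$ normal words of degree at most $(m+2)j+2$ (the paper instead does a short calculus estimate on the pole of the Poincar\'e series, which you also sketch), and you read off regularity directly from the normal-word language $z^*y^*\cup z^*y^*xz^*$ rather than drawing the automaton for the leading monomials.
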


\begin{proof} We equip monomials with the left-to-right degree-lexicographical ordering assuming $x>y>z$. If ${\rm char}\,\K\notin\{2,3\}$, $A$ is the algebra from Example~\ref{exa1} with $a=2$, $b=-3$ and $s=-2$. If ${\rm char}\,\K=0$, $s$ has infinite order in $\K^*$. By Example~\ref{exa1}, $H_A(t)=\frac1{(1-t)^{3}}$ (hence ${\rm GKdim}(A)=3$) and the leading monomials of the members of the reduced Gr\"obner basis of the ideal of relations of $A$ are $yz$, $xz^jx$ and $xz^jy$ with $j\in\Z_+$. They are easily seen to form a regular language. Indeed, the corresponding graph is
$$
\begin{pspicture}(0,0)(6,3)
\pscircle[linewidth=.4pt](1,2.5){0.2}
\put(0.9,2.4){$\scriptstyle S$}
\psline[linewidth=.8pt]{->}(1,2.3)(1,1.7)
\put(0.8,2){$\!y$}
\pscircle[linewidth=.4pt](1,1.5){0.2}
\psline[linewidth=.8pt]{->}(1,1.3)(1,0.7)
\pscircle[linewidth=.4pt](1,0.5){0.2}
\put(0.8,1){$\!z$}
\put(0.9,0.4){$\scriptstyle T$}
\pscircle[linewidth=.4pt](4,2.5){0.2}
\put(3.9,2.4){$\scriptstyle S$}
\psline[linewidth=.8pt]{->}(4,2.3)(4,1.7)
\put(3.8,2){$\!x$}
\pscircle[linewidth=.4pt](4,1.5){0.2}
\psarc[linewidth=.8pt]{->}(3.6,1.5){0.32}{27}{333}
\put(3,1.4){$z$}
\psline[linewidth=.8pt]{->}(4,1.3)(4,0.7)
\pscircle[linewidth=.4pt](4,0.5){0.2}
\put(3.8,0.92){$\!x$}
\put(3.9,0.4){$\scriptstyle T$}
\psline[linewidth=.8pt]{->}(4.2,1.4)(4.8,0.7)
\pscircle[linewidth=.4pt](4.9,0.5){0.2}
\put(4.8,0.4){$\scriptstyle T$}
\put(4.6,1.2){$\!y$}
\end{pspicture}
$$
Hence $A$ is an automaton algebra and ${\rm GKdim}(A)=3$ provided ${\rm char}\,\K=0$. If ${\rm char}\,\K=p\geq 5$, then by Example~\ref{exa1}, $H_A(t)=\frac{1-t^{m}-t^{m+1}}{(1-t)^2(1-t-t^{m})}$, where $m=m(p)$  is the order of $-2$ in $\Z_p^*$ if $p\geq 5$. The latter shows that $A$ has exponential growth. Indeed, the Poincar\'e series of $A$ is $P_A(t)=\frac{1-t^{m}-t^{m+1}}{(1-t)^3(1-t-t^{m})}=\sum r(n)t^n$ and an easy calculus exercise shows that $\lim\limits_{n\to\infty}\frac{\ln r(n)}{n}=\ln\frac1{c_m}>0$, where $c_m$ is the only zero of $1-t-t^m$ in the interval $(0,1)$.
\end{proof}

Next we plug in $a=1$ into Example~\ref{exa2}.

\begin{lemma}\label{exale1} Let $\K$ be a field and $B$ be the cubic $\K$-algebra given by generators $x,y$ and relations $y^3$ and $x^2y-yx^2-yxy$. If ${\rm char}\,\K=0$, then $B$ is an automaton algebra and satisfies ${\rm GKdim}(B)=3$. If ${\rm char}\,\K\neq 0$, then $B$ has exponential growth and therefore ${\rm GKdim}(B)=\infty$.
\end{lemma}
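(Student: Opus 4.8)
The plan is to read everything off Example~\ref{exa2} specialized at $a=1$. When $a=1$ one has $1{+}a{+}\dots{+}a^k=k+1$, so the dichotomy governing Example~\ref{exa2} is exactly the dichotomy ${\rm char}\,\K=0$ versus ${\rm char}\,\K=p>0$.

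First suppose ${\rm char}\,\K=0$. Then $k+1\neq0$ in $\K$ for every $k\in\N$, so we are in the first case of Example~\ref{exa2}: the leading monomials of the reduced Gr\"obner basis of the ideal of relations of $B$ are $y^3$, $x^2y$ and $y^2(xy)^jxy^2$ with $j\in\Z_+$, and $H_B(t)=\frac1{(1+t)(1-t)^3}$. For the automaton property I would use the $NW$/$LM$ correspondence discussed after Definition~\ref{aual}: it suffices that the set $LM$ of leading monomials of the reduced Gr\"obner basis be a regular language. Here $LM=\{y^3,\,x^2y\}\cup y^2(xy)^*xy^2$ is a finite set together with an evidently regular set, hence regular; so $B$ is an automaton algebra. (Equivalently one may exhibit a finite quiver accepting the normal words $y^\epsilon(xy)^mxy^2(xy)^jx^s$ and $y^e(xy)^jx^s$ listed in Example~\ref{exa2}, in the style of the quiver in the proof of Lemma~\ref{exale}.) For the dimension, $P_B(t)=\frac{H_B(t)}{1-t}=\frac1{(1+t)(1-t)^4}$, whose $n$-th coefficient $\alpha_n$ grows like a positive constant times $n^3$ (the order-$4$ pole at $t=1$ dominates the simple pole at $t=-1$), so ${\rm GKdim}(B)=\limsup_n\frac{\ln\alpha_n}{\ln n}=3$.

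Now suppose ${\rm char}\,\K=p>0$. Then $k+1=0$ in $\K$ precisely when $p\mid k+1$, so the minimal such $k$ is $p-1$, and Example~\ref{exa2} gives a finite reduced Gr\"obner basis together with $H_B(t)=\frac{1-t^{2p+1}}{(1-t)^2(1-t^2-t^{2p+1})}$, whence $P_B(t)=\frac{1-t^{2p+1}}{(1-t)^3(1-t^2-t^{2p+1})}=\sum r(n)t^n$. The remaining task is to convert this into exponential growth, exactly as in the proof of Lemma~\ref{exale}. The factor $q(t)=1-t^2-t^{2p+1}$ is strictly decreasing on $(0,\infty)$ with $q(0)=1$ and $q(1)=-1$, so it has a unique positive root $c$, and $c\in(0,1)$. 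Since $\frac1{q(t)}=\sum_{\ell\geq0}(t^2+t^{2p+1})^\ell$ has non-negative coefficients, Pringsheim's theorem shows $c$ is the singularity of $\frac1q$ of smallest modulus; the other factor $\frac{1-t^{2p+1}}{(1-t)^3}$ is analytic and non-zero at $c$ since $0<c<1$, so $c$ is also the radius of convergence of $P_B$. Hence $\limsup_n r(n)^{1/n}=\frac1c>1$, i.e.\ $\limsup_n\frac{\ln r(n)}{n}=\ln\frac1c>0$, which is precisely exponential growth, and in particular ${\rm GKdim}(B)=\infty$. The only genuine (if routine) estimate in the whole proof is this last step; everything else is a direct reading of Example~\ref{exa2}.
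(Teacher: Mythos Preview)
Your proof is correct and follows essentially the same route as the paper's: specialize Example~\ref{exa2} at $a=1$, read off the leading monomials and Hilbert series, check regularity of $LM$ (the paper draws the graph, you write the regular expression $y^2(xy)^*xy^2$), and extract the growth from the rational generating function. Two small remarks: your exponent $2p+1$ is the one that actually follows from Example~\ref{exa2} with $k=p-1$ (the paper's own proof writes $2p+2$, which appears to be a typo and is immaterial to the argument), and your explicit invocation of Pringsheim's theorem is a clean way to make precise what the paper calls ``an easy calculus exercise.''
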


\begin{proof} We equip monomials with the left-to-right degree-lexicographical ordering assuming $x>y$. If ${\rm char}\,\K=0$, Example~\ref{exa2} yields that $H_B(t)=\frac1{(1+t)(1-t)^{3}}$ (hence ${\rm GKdim}(B)=3$) and that the leading monomials of the members of the reduced Gr\"obner basis of the ideal of relations of $B$ are $y^3$, $x^2y$ and $y^2(xy)^jxy^2$ with $j\in\Z_+$. They are easily seen to form a regular language. Indeed, the corresponding graph is
$$
\begin{pspicture}(0,0)(12,3)
\pscircle[linewidth=.4pt](1,2.5){0.2}
\put(0.9,2.4){$\scriptstyle S$}
\psline[linewidth=.8pt]{->}(1.2,2.5)(2,2.5)
\put(1.6,2.7){$\!y$}
\pscircle[linewidth=.4pt](2.2,2.5){0.2}
\psline[linewidth=.8pt]{->}(2.4,2.5)(3.2,2.5)
\put(2.8,2.7){$\!y$}
\pscircle[linewidth=.4pt](3.4,2.5){0.2}
\psline[linewidth=.8pt]{->}(3.6,2.5)(4.4,2.5)
\put(4,2.7){$\!y$}
\pscircle[linewidth=.4pt](4.6,2.5){0.2}
\put(4.5,2.4){$\scriptstyle T$}
\pscircle[linewidth=.4pt](1,1.3){0.2}
\put(0.9,1.2){$\scriptstyle S$}
\psline[linewidth=.8pt]{->}(1.2,1.3)(2,1.3)
\put(1.6,1.5){$\!x$}
\pscircle[linewidth=.4pt](2.2,1.3){0.2}
\psline[linewidth=.8pt]{->}(2.4,1.3)(3.2,1.3)
\put(2.8,1.5){$\!x$}
\pscircle[linewidth=.4pt](3.4,1.3){0.2}
\psline[linewidth=.8pt]{->}(3.6,1.3)(4.4,1.3)
\put(4,1.5){$\!y$}
\pscircle[linewidth=.4pt](4.6,1.3){0.2}
\put(4.5,1.2){$\scriptstyle T$}
\pscircle[linewidth=.4pt](6,2.5){0.2}
\put(5.9,2.4){$\scriptstyle S$}
\psline[linewidth=.8pt]{->}(6.2,2.5)(7,2.5)
\put(6.6,2.7){$\!y$}
\pscircle[linewidth=.4pt](7.2,2.5){0.2}
\psline[linewidth=.8pt]{->}(7.4,2.5)(8.2,2.5)
\put(7.8,2.7){$\!y$}
\pscircle[linewidth=.4pt](8.4,2.5){0.2}
\psline[linewidth=.8pt]{->}(8.35,2.3)(8.35,1.5)
\psline[linewidth=.8pt]{<-}(8.45,2.3)(8.45,1.5)
\put(8.1,1.9){$\!x$}
\put(8.5,1.9){$y$}
\pscircle[linewidth=.4pt](8.4,1.3){0.2}
\psline[linewidth=.8pt]{->}(8.2,1.3)(7.4,1.3)
\pscircle[linewidth=.4pt](7.2,1.3){0.2}
\psline[linewidth=.8pt]{->}(7,1.3)(6.2,1.3)
\pscircle[linewidth=.4pt](6,1.3){0.2}
\put(7.8,1){$y$}
\put(6.6,1){$y$}
\put(5.9,1.2){$\scriptstyle T$}
\end{pspicture}
$$
Hence $B$ is an automaton algebra and ${\rm GKdim}(B)=3$ provided ${\rm char}\,\K=0$. If ${\rm char}\,\K=p$ with $p$ being a prime number, then $k$ in Example~\ref{exa1} equals $p-1$ and we have $H_B(t)=\frac{1-t^{2p+2}}{(1-t)^2(1-t^2-t^{2p+2})}$. This shows that $B$ has exponential growth. Indeed, the Poincar\'e series of $B$ is  $P_B(t)=\frac{1-t^{2p+2}}{(1-t)^3(1-t^2-t^{2p+2})}=\sum r(n)t^n$ and an easy calculus exercise shows that $\lim\limits_{n\to\infty}\frac{\ln r(n)}{n}=\ln\frac1{c_p}>0$, where $c_p$ is the only zero of the polynomial $1-t^2-t^{2p+2}$ in the interval $(0,1)$.
\end{proof}

\subsection{Proof Theorem~\ref{main}}

The combination of Lemmas~\ref{exale}, \ref{exale1} and Theorem~\ref{fgb} yield Theorem~\ref{main}. Indeed, by Lemmas~\ref{exale} and \ref{exale1}, $A$ and $B$ from Theorem~\ref{main} are automaton algebras. The same lemmas also imply that ${\rm GKdim}(A)={\rm GKdim}(B)=3$, while ${\rm GKdim}(A_p)={\rm GKdim}(B_p)=\infty$ for every prime number $p\geq5$. By Theorem~\ref{fgb} then $A$ and $B$ are IGB-algebras, which completes the proof of Theorem~\ref{main}.

\section{Proof of Theorem~\ref{inter}}

Let $\K$ be an arbitrary field and $C$ be the $\K$-algebra given by four generators $x,y,z,u$ and seven quadratic relations $xu-yz$, $yu-zx$, $zu-uz$, $yy$, $yx$, $xy$ and $xx$. We have to show that $C$ has intermediate growth. We equip the monomials in $x,y,z,u$ with the left-to-right degree-lexicographical ordering assuming $x>y>z>u$. First, we find the reduced Gr\"obner basis in the ideal of relations of $C$. Degree 3 overlaps $xyu=(xy)u=x(yu)$ and $yyu=(yy)u=y(yu)$ produce two monomial relations $yzx$ and $xzx$, while all other degree 3 overlaps of the leading monomials of defining relations resolve. Thus we have exactly two degree three members of the reduced Gr\"obner basis: $yzx$ and $xzx$. Degree 4 overlaps $yzxu=(yzx)u=yz(xu)$ and $xzxu=(xzx)u=xz(xu)$ produce two monomial relations $yzyz$ and $xzxz$, while all other degree 4 overlaps resolve. Thus we have exactly two degree 4 members of the reduced Gr\"obner basis: $yzyz$ and $xzxz$. Degree 5 overlaps $yzyzu=(yzyz)u=yzy(zu)$ and $xzxzu=(xzxz)u=xzx(zu)$ produce (after reduction) two monomial relations $yz^2xz$ and $xz^2xz$, while all other degree 5 overlaps resolve. Thus we have exactly two degree 5 members of the reduced Gr\"obner basis: $yz^2xz$ and $xz^2xz$. This pattern is easily seen to go on:
\begin{equation}\label{rgbC}
\begin{array}{l}
\text{the reduced  Gr\"obner basis in the ideal of relations of $C$ consists of}\\ \text{$xu{-}yz$, $yu{-}zx$, $zu{-}uz$, $yx$, $xx$, $yz^{k+1}xz^k$, $xz^{k+1}xz^k$, $yz^kyz^k$ and $xz^kyz^k$ for $k\in\Z_+$.}
\end{array}
\end{equation}

Indeed, one can use the following inductive argument. If $n\geq 5$ is even, we have $n=2k+2$ with $k>1$. The degree $n+1$ overlaps $yz^kyz^{k}u=(yz^kyz^{k})u=yz^kyz^{k-1}(zu)$ and $xz^kyz^{k}u=(xz^kyz^{k})u=xz^kyz^{k-1}(zu)$ produce (after reduction) $yz^{k+1}xz^{k}$ and $xz^{k+1}xz^{k}$. All other overlaps of degree $n+1$ resolve. It is especially easy to see since an overlap of two monomial relations always resolves. If $n\geq 5$ is odd, we have $n=2k+1$ with $k > 1$. The degree $n+1$ overlaps $yz^{k}xz^{k-1}u=(yz^{k}xz^{k-1})u=yz^{k}xz^{k-2}(zu)$ and $xz^{k}xz^{k-1}u=(xz^{k}xz^{k-1})u=xz^{k}xz^{k-2}(zu)$ produce (after reduction) $yz^{k}yz^{k}$ and $xz^{k}yz^{k}$. All other overlaps of degree $n+1$ resolve. This argument verifies (\ref{rgbC}), from which we immediately have
\begin{equation}\label{rgbC1}
\begin{array}{l}
\text{the leading monomials of the reduced  Gr\"obner basis in the ideal of relations of $C$ are}\\ \text{$xu$, $yu$, $zu$, $yx$, $xx$, $yz^{k+1}xz^k$, $xz^{k+1}xz^k$, $yz^kyz^k$ and $xz^kyz^k$ for $k\in\Z_+$.}
\end{array}
\end{equation}
Now we can easily describe the corresponding normal words:
\begin{equation}\label{rgbC2}
\begin{array}{l}
\text{the normal words for $C$ are $u^jz^k$ and $u^jz^{k}s_0z^{k_1}s_1z^{k_2}s_2{\dots}z^{k_m}s_mz^{k_{m+1}}$,}\\ \text{where $s_r\in\{x,y\}$, $j,k,m,k_0,\dots,k_{m+1}\in\Z_+$, $k_1>k_{2}>{\dots}>k_{m+1}$}\\ \text{and $k_r>k_{r+1}+1$ whenever $1\leq r\leq m$ and $s_r=x$}.
\end{array}
\end{equation}
Let $p(n)$ be the number of normal words for $C$ of degree at most $n$. That is, the Poincar\'e series of $C$ is $P_C(t)=\sum p(n)t^n$. According to (\ref{poex}), in order to show that $C$ has intermediate growth, it suffices to prove that
\begin{equation}\label{trtt}
\textstyle \lim\limits_{n\to\infty} \frac{\ln\ln p(n)}{\ln n}=\frac12.
\end{equation}

Let $n\in\N$ and $m$ be the largest integer not exceeding $\sqrt n$. By (\ref{rgbC2}), the words
$$
\text{$s_1z^{2m-2}s_2z^{2m-4}s_3z^{2m-6}\dots z^4s_{m-1}z^2s_m$ with $s_j\in\{x,y\}$}
$$
are normal words for $C$ of degree $\leq n$. Since we listed $2^m$ words, we have $p(n)\geq 2^m$. Since $\sqrt n-1<m\leq \sqrt n$, it follows that
\begin{equation}\label{trtt1}
\textstyle \liminf\limits_{n\to\infty}\frac{\ln p(n)}{\sqrt n}\geq \ln 2\ \Longrightarrow\ \liminf\limits_{n\to\infty} \frac{\ln\ln p(n)}{\ln n}\geq \frac12.
\end{equation}

Next, by (\ref{rgbC2}), the number $a(n)$ of normal words of $C$ of degree $n$ increases as a function of $n$. Since $p(n)=a(0)+{\dots}+a(n)$, we have $p(n)\leq (n{+}1)a(n)$. If $b(n)$ is the number of normal words for $C$ of degree $n$ starting with $x$ or $y$ (we include the empty word $1$ as well), (\ref{rgbC2}) implies that $a(n)=(n{+}1)b(0)+nb(1)+{\dots}+2b(n{-}1)+b(n)$. Since $b(n)$ increases as well, $a(n)\leq \frac{n(n{+}1)}{2}b(n)\leq (n{+}1)^2b(n)$. Hence $p(n)\leq (n{+}1)a(n)\leq (n{+}1)^3b(n)$. By  (\ref{rgbC2}), $b(n)$ does not exceed the number $\phi(n)$ of degree $n$ words of the form $s_1z^{k_1}s_2z^{k_2}s_3{\dots}z^{k_{m-1}}s_mz^{k_{m}}$, where $s_j\in\{x,y\}$, $m\in\N$, $k_1,\dots,k_{m}\in\Z_+$ and $k_1>k_{2}>{\dots}>k_m$. Now one can easily see that the numbers $\phi(n)$ have the following generating function:
\begin{equation}\label{gen}
\textstyle \sum\limits_{n=0}^\infty \phi(n)t^n=\prod\limits_{n=1}^\infty (1+2t^n).
\end{equation}
Note that the infinite product in the right-hand side of (\ref{gen}) converges uniformly on compact subsets of the open unit disk ${\mathbb D}$ of the complex plane $\C$. Thus we can treat it as a holomorphic function $\Phi$ on ${\mathbb D}$. By the Cauchy formula
\begin{equation}\label{cau}
\phi(n)=\frac{1}{2\pi i}\oint_{\Gamma} \frac{\Phi(z)\,dz}{z^{n+1}},
\end{equation}
where $\Gamma$ is any closed contour inside ${\mathbb D}$ which encircles $0$ once counterclockwise. Now we specify $\Gamma$ to be the circle $\Gamma_n$ centered at zero of radius $r_n=1-\frac1{\sqrt n}$ (assume $n\geq 2$). First, observe that $|\Phi(z)|\leq \prod\limits_{k=1}^\infty (1+2r_n^k)$ for $z\in\Gamma_n$. Hence,
$$
\textstyle \ln|\Phi(z)|\leq \sum\limits_{k=1}^\infty \ln(1+2r_n^k)\leq 2\sum\limits_{k=1}^\infty r_n^k=\frac{2r_n}{1-r_n}\leq 2\sqrt n\ \ \text{for $z\in\Gamma_n$}.
$$
Hence
$$
|\Phi(z)|\leq e^{2\sqrt n}\ \ \text{for $z\in\Gamma_n$}.
$$
Using the L'Hopital rule or the Taylor expansion of $\ln(1+t)$, one easily shows that
$$
\textstyle\lim\limits_{n\to\infty} \frac{r_n^{-n}}{e^{\sqrt n}}=\sqrt e.
$$
Using the above two displays and (\ref{cau}) with $\Gamma=\Gamma_n$ and estimating the integral from above by the product of the length of the path by the maximum of the absolute value of the function under the integral, we see that there exists a constant $a>0$ such that
$$
\phi(n)=|\phi(n)|\leq r_n^{-n}\max\{|\Phi(z)|:z\in\Gamma_n\}\leq ae^{3\sqrt n}\ \ \text{for all $n$}.
$$
Since $p(n)\leq (n{+}1)a(n)\leq (n{+}1)^3b(n)\leq (n{+}1)^3\phi(n)\leq a(n{+}1)^3e^{3\sqrt n}$, we have
\begin{equation}\label{trtt2}
\textstyle  \limsup\limits_{n\to\infty}\frac{\ln p(n)}{\sqrt n}\leq e^3\ \Longrightarrow\ \limsup\limits_{n\to\infty} \frac{\ln\ln p(n)}{\ln n}\leq \frac12.
\end{equation}
The inequalities (\ref{trtt1}) and (\ref{trtt2}) imply (\ref{trtt}), which completes the proof of Theorem~\ref{inter}.

\section{Concluding remarks}


\begin{remark}\label{rem2} One could apply similar arguments starting with other than in Examples~\ref{exa1} or~\ref{exa2} varieties of finitely presented algebras. What one needs is many (impossible to cover by a single non-trivial Zarisski closed set) algebras with exceptionally fast growth rate compared to that of generic members of the variety. We went for the ones we considered to be the simplest among other suitable varieties.
\end{remark}

\begin{remark}\label{rem3} Since the algebra $A$ in Theorem~\ref{main} is quadratic, one may ask whether it is Koszul and whether Koszulity has any bearing on the question we considered. We refer to the book \cite{popo} for the definition and the properties of Koszul algebras. Well, $A$ is Koszul. This can be verified directly by computing the Koszul complex of $A$ and showing that it is exact. Thus Koszulity does not save the day: a Koszul automaton algebra can still be an IGB-algebra.
\end{remark}

\begin{remark}\label{rem4} It so happens that all examples of automaton IGB algebras we were able to produce (including those we did not mention in this text) have non-trivial zero divisors. We have no idea of why it happens this way. This prompts us to ask the following question. Unfortunately, we have no arguments to back either positive or negative answer.
\end{remark}

\begin{question}\label{q1} Does there exist an automaton IGB-algebra $A$, which is also a domain $(=$has no non-trivial zero divisors$)?$
\end{question}

\begin{remark}\label{rem5} Another reason we include Theorem~\ref{inter} is that, as it is apparent from its proof, the ideal of relations of the algebra $C$ with respect to the original generators and the left-to-right degree-lexicographical ordering assuming $x>y>z>u$ is very nice (although infinite) in a sense that it follows a clear pattern and can be written in full. On the other hand, the algebra $C$ is not an automaton algebra because it has intermediate growth, see \cite{ufn1}.
\end{remark}

\begin{remark}\label{rem6} More subtle considerations than in the proof of Theorem~\ref{inter} allow to show that the sequence $\frac{\ln p(n)}{\sqrt n}$ converges to a positive limit. What we actually did in the proof is to demonstrate that its lower limit is at least $\ln2$ and its upper limit is at most $e^3$.
\end{remark}

\begin{remark}\label{rem7} As we have mentioned in the introduction, a quadratic algebra with either at most two generators or at most two relations is an FGB-algebra and therefore can not have intermediate growth. Thus a quadratic algebra of intermediate growth has at least 3 generators and at least 3 linearly independent quadratic relations. Our example in Theorem~\ref{inter} has 4 generators and 7 quadratic relations. So, potentially, there is still room for improvement. In particular, the following question is open.
\end{remark}

\begin{question}\label{q2} Does there exist a quadratic algebra $A$ of intermediate growth presented by three generators and some homogeneous degree $2$ relations$?$
\end{question}

We do have an argument showing that if such an algebra $A$ exists, its space of quadratic relations must have dimension either $4$ or $5$. That is, $A$ must be presented by 3 generators and either 4 or 5 linearly independent quadratic relations.


\medskip

{\bf Acknowledgements:} \

We are grateful to IHES and MPIM for hospitality, support, and excellent research atmosphere.
This work is funded by the ERC grant 320974, and partially supported by the project PUT9038. We would also like to express our gratitude
to the anonymous referee for numerous comments, which helped to improve the presentation of the paper.


\vfill\break

\normalsize

\vskip1truecm

\scshape

\noindent  Natalia Iyudu\

\noindent School of Mathematics

\noindent  The University of Edinburgh

\noindent James Clerk Maxwell Building

\noindent The King's Buildings

\noindent Peter Guthrie Tait Road

\noindent Edinburgh

\noindent Scotland EH9 3FD

\noindent E-mail addresses: \qquad {\tt niyudu@exseed.ed.ac.uk}

\vskip1truecm

\scshape

\noindent    Stanislav Shkarin\

\noindent Queens's University Belfast

\noindent Department of Pure Mathematics

\noindent University road, Belfast, BT7 1NN, UK

\noindent E-mail addresses: \qquad  {\tt s.shkarin@qub.ac.uk}

\end{document}